\def\FF{\mathbb F}
\def\Aut{{\sf Aut}} 
\def\Out{{\sf Out}}
\def\Cos{{\sf Cos}}
 \def\soc{{\sf soc}}
\def\AG{{\rm AG}}
\def\HA{{\sf HA}} \def\AS{{\sf AS}} \def\PA{{\sf PA}} \def\TW{{\sf TW}}
\def\S{{\rm S}}
\def\AS{{\rm AS}}
\def\HA{{\rm HA}}
\def\PA{{\rm PA}}
\def\SD{{\rm SD}}
\def\CD{{\rm CD}}
\def\HS{{\rm HS}}
\def\HC{{\rm HC}}
\def\TW{{\rm TW}}
\def\Ga{{\Gamma}}
\def\a{\alpha}
\def\b{\beta}
  \def\o{\omega}
 \def\GL{{\rm GL}}\def\PG{{\rm PG}}
\def\Sp{{\rm Sp}}
\def\Sym{{\rm Sym}}
  \def\PGL{{\rm PGL}}
\def\GL{{\rm GL}} 
\def\AGL{{\rm AGL}}
\def\HS{{\rm HS}}
\def\soc{{\rm soc}}
\def\vs{\vskip0.05in}
\def\calP{{\mathcal P}}
\def\calB{{\mathcal B}}
\def\calD{{\mathcal D}}
\def\calI{{\mathcal I}}
\def\calF{{\mathcal F}}
\def\le{\leqslant}
\def\ge{\geqslant}
\def\leq{\leqslant}
\def\geq{\geqslant}
\newtheorem{theorem}{Theorem}[section]%
\newtheorem{lemma}[theorem]{Lemma}%
\newtheorem{corollary}[theorem]{Corollary}%
\newtheorem{proposition}[theorem]{Proposition}%
\newtheorem{definition}[theorem]{Definition}%
\newtheorem{problem}[theorem]{Problem}%
\newtheorem{example}[theorem]{Example}%
\def\qed{{\hfill$\Box$\smallskip}
	\medbreak}
\begin{document}
	
	\title[Locally primitive block designs]{Locally primitive block designs}
	\thanks{The project was partially supported by the NNSF of China (11931005).}
	
	\author{Jianfu Chen}
	\address{Department of Mathematics \\
		Southern University of Science and Technology,  Shenzhen 518055,  P.R.China}
	\email{chenjf6@sustech.edu.cn}
	
	\author{Peice Hua}
	\address{
		SUSTech International Center for Mathematics\\
		Southern University of Science and Technology,  Shenzhen 518055,  P.R.China}
	\email{huapc@pku.edu.cn}
	
	\author{Cai Heng Li}
	\address{Department of Mathematics \\
		SUSTech International Center for Mathematics\\
		Southern University of Science and Technology,  Shenzhen 518055,  P.R.China}
	\email{lich@sustech.edu.cn}
	
	\author{Yanni Wu}
	\address{Department of Mathematics \\
		Southern University of Science and Technology,  Shenzhen 518055,  P.R.China}
	\email{12031209@mail.sustech.edu.cn}
	
	\begin{abstract}
		
		A locally primitive 2-design is a 2-design admitting an automorphism group $G$ with primitive local actions.
		It is proved that $G$ is point-primitive, and either $G$ is an almost simple group, or $G$ acting on the points is an affine group.
		
	\end{abstract}
	
	\maketitle
	
	\date\today
	
	\begin{flushleft}
		
		{\bf Keywords:} 2-design; bipartite graph; automorphism group; flag-transitive; locally primitive
		
	\end{flushleft}

	\section{Introduction}

	A {\em 2-design} is a point-block incidence geometry $\calD=(\calP,\calB,\calI)$, where $\calP$ is the set of points, $\calB$ is the set of blocks, and $\calI$ is the incidence relation, such that each block is incident with a constant number of points, and any two points are incident with a constant number of blocks.
	It is known that the number of blocks incident with a given point is also a constant for a 2-design.


	The well-known Fisher's inequailty shows that the number of points is less than or equal to the number of blocks for a 2-design, i.e., $|\calP|\leq|\calB|$.
	If $|\calP|=|\calB|$, then $\calD$ is called \textit{symmetric}.
	A {\em flag} of a design $\mathcal{D}$ is an incident point-block pair.
	We denote by $\mathcal{F}$ the set of all flags of $\calD$.
	For a point $\a$ and a block $\b$, the notation $\a \sim \b$ or $\b \sim \a$ means that $\a$ is incident with $\b$.

	An automorphism of a 2-design $\mathcal{D}=(\calP,\calB,\calI)$ is a permutation on the point set $\mathcal{P}$, which preserves the block set $\calB$ and the incidence relation $\calI$.
	All of the automorphisms form the automorphism group $\Aut(\mathcal{D})$.
	For a subgroup $G\le\Aut(\calD)$, we call that $\calD$ is {\em $G$-point-transitive, $G$-block-transitive}, or {\em $G$-flag-transitive}, if $G$ is transitive on $\calP$, $\calB$, or $\calF$, respectively.
	
	A transitive permutation group is called {\em primitive} if it only preserves the trivial partitions of the set,
	and {\em quasiprimitive} if each of its nontrivial normal subgroups is transitive.
	Then, similarly, one can define {\em $G$-point-(quasi)primitive}, {\em $G$-block-(quasi)primitive} and {\em $G$-flag-(quasi)primitive} designs.\vs
	
	Constructing and classifying $2$-designs admitting a block-transitive automorphism group have been a long-term project, and many important results have been obtained in the literature.
	For instance, flag-transitive finite linear spaces, namely, those 2-designs satisfying that any two points are incident with exactly one block, have been well-characterized by Kantor and then a team of six people; see \cite{KantorProplane,Buekenhout2vk1}.
	Further, a nice reduction is given by Camina and Praeger in \cite{Camina-Praeger} for line-transitive and point-quasiprimitive finite linear spaces.
	They showed that such groups are affine or almost simple.
	Note that we traditionally denote by $\lambda$ the number of blocks incident with two common points, and by $r$ the number of blocks incident with a common point.
	Some flag-transitive 2-designs with special $r$ and $\lambda$ have also been studied.
	For instance,
	see \cite{Alavirlam1, Zies1988} for $\gcd\,(r,\lambda)=1$, \cite{RegueiroReduction, Liang} for $\lambda=2$, \cite{YongliChen} for $\lambda$ prime and \cite{Alavirprime} for $r$ prime.
	Symmetric designs with certain automorphism groups are investigated in \cite{Kantor2tr} and \cite{rank3as, rank3ha}, respectively for automorphism groups being 2-transitive on points and being primitive of rank 3.

	For convenience, we denote a point by $\a$, and a block by $\b$.
	Then let $\calD(\a)$ be the set of blocks incident with $\a$, and let $\calD(\b)$ be the set of points incident with $\b$.
	We observe that a 2-design $\calD$ is $G$-flag-transitive if and only if $\calD$ is {\em $G$-locally-transitive}, namely,
	\begin{itemize}
		\item $G_\a$ is transitive on $\calD(\a)$ for all points $\a\in\calP$, and\vskip0.02in
		\item $G_\b$ is transitive on $\calD(\b)$ for all blocks $\b\in\calB$.
	\end{itemize}
	
	In this paper, we study locally primitive designs, defined below.
	
	\begin{definition}
		{\rm
			Let $\calD=(\calP,\calB,\calI)$ be a 2-design, and let $G\le\Aut(\calD)$.
			Then $\calD$ is said to be {\em $G$-locally primitive} if $G_\a$ is primitive on $\calD(\a)$ for all each point $\a\in\calP$, and $G_\b$ is primitive on $\calD(\b)$ for each block $\b\in\calB$.
		}
	\end{definition}

	The $G$-locally primitive finite linear spaces were studied in \cite{locallyprimitivelinearspaces,flagtransitivelinearspaces}, which shows that $G$ acting on $\calP$ is an affine group or an almost simple group.
	We shall prove the same conclusion for the general 2-designs.

	The key property of locally primitive 2-designs we use to achieve our goal is that such designs are point-primitive (see Lemma~\ref{loc-pri1}).
	This allows us to use the O'Nan-Scott-Praeger Theorem for finite (quasi)primitive permutation groups to analyse automorphism groups of locally primitive designs.
	See \cite{ON_3} for a detailed description of the O'Nan-Scott Theorem for primitive groups.
	Praeger \cite{quasi01,quasi02} generalised the O'Nan-Scott Theorem to quasiprimitive groups and showed that a finite quasiprimitive group is one of eight types (with their abbreviations given in parentheses):
	(1) {\em Almost Simple} (\AS);
	(2) {\em Holomorph Affine} (\HA);
	(3) {\em Holomorph Simple} (\HS);
	(4) {\em Holomorph Compound} (\HC);
	(5) {\em Twisted Wreath product} (\TW);
	(6) {\em Simple Diagonal} (\SD);
	(7) {\em Compound Diagonal} (\CD);
	and
	(8) {\em Product Action type} (\PA).
	More detailed information about these types of groups are given in Section \ref{SectionPre}.

	In order to give a reduction for $G$-locally primitive designs, we need to analyse the quasiprimitive types of $G$ acting on the point set $\calP$ and the block set $\calB$.
	There are eight possible types of $G$ acting on $\calP$ whereas there are nine possible types on $\calB$ (non-quasiprimitive action and eight types of quasiprimitive actions).
	Hence there are many cases we may need to tackle with.

	The incidence graph of an incidence geometry provides a different language for and a different view on the geometry.
	
	\begin{definition}
		{\rm
			Let $(\calP,\calB,\calI)$ be an incidence geometry with flag set $\calF$.
			Then the {\em incidence graph} is the bipartite graph $\Ga=(\calP\cup\calB,\calF)$ with biparts $\calP$ and $\calB$ of vertex set, and with edge set $\calF$.
		}
	\end{definition}

	Denote by $\Ga(\a)$ the neighbors of the point $\a$.
	It is not hard to show that an incidence geometry $\calD=(\calP,\calB,\calI)$ is a 2-design if and only if its incidence graph satisfies that for any two vertices $\a_1,\a_2\in\calP$, the intersection of their neighbors has the same size, i.e.,  $|\Ga(\a_1)\cap\Ga(\a_2)|=\lambda$, a constant.
	Besides, given a bipartite graph $\Ga$, we can get an incidence geometry by viewing the two biparts of $\Gamma$ as the point set and the block set, and defining the incidence relation by the edge set of $\Gamma$.
	Therefore, the study of 2-designs can be interpreted as the study of a type of bipartite graphs.
	Since a 2-design is $G$-flag-transitive if and only if its incidence graph is $G$-edge-transitive, the study of flag-transitive 2-designs can also be interpreted as the study of a type of edge-transitive bipartite graphs, which has been an important research topic in algebraic graph theory.

	Similar to the definition of locally primitive designs, one can define {\em locally primitive graphs}.
	The incidence graph of a $G$-locally primitive design is a $G$-locally primitive bipartite graph.
	A systematic study of locally primitive graphs was carried out in \cite{s-arc}, which provides us a useful tool to analyse the locally primitive designs.
	The authors in \cite{s-arc} gave a reduction for $G$-locally primitive bipartite graphs,
	assuming that $G$ acts faithfully and quasiprimitively on the two parts of the bipartition being the orbits of $G$.
	They showed that if $G$ acts on the two vertex-orbits of type $\{X,Y\}$, then $X=Y\in\{\HA, \AS,\TW, \PA\}$, or $\{X,Y\}=\{\SD,\PA\}$ or $\{\CD,\PA\}$.
	Studying locally primitive 2-designs is not only naturally motivated by the classification of 2-designs, but also by the study of locally primitive graphs.
	Giving a reduction or a further classification of locally primitive 2-designs contributes to the classification of flag-transitive designs and enriches the content of locally primitive graphs.

	With the aid of the preliminary result on locally primitive bipartite graphs, we give a reduction for locally primitive 2-designs in the following theorem, as the main result of this paper.

	\begin{theorem}\label{MainThm}
		Let $\mathcal{D}=(\mathcal{P},\mathcal{B},\calI)$ be a $G$-locally primitive $ 2 $-design.
		Then $G$ is flag-transitive and point-primitive on $\calD$, and either
		\begin{itemize}
			\item[\rm(1)] $G$ is an almost simple group and $G^\mathcal{B}$ is quasiprimitive, or\vs
			\item[\rm(2)] $G$ is an affine group, and either
				\begin{itemize}
					\item[\rm(i)] $G^\mathcal{B}$ is also primitive affine, or
					\item[\rm(ii)] $G^\mathcal{B}$ is non-quasiprimitive and $\calD$ is a subdesign of $\AG_i(d,q)$.
				\end{itemize}

		\end{itemize}
		Examples exist for each of the three cases.
	\end{theorem}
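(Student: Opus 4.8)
The plan is to combine the two soft structural facts (flag-transitivity and point-primitivity) with the reduction for locally primitive bipartite graphs from \cite{s-arc}, and then to strip away every quasiprimitive type that the 2-design axioms cannot support. Throughout write $v=|\calP|$, $b=|\calB|$, and let $k,r,\lambda$ be the usual parameters, assuming the nondegeneracy $k<v$. First the easy part: since a primitive group is transitive, $G_\a$ is transitive on $\calD(\a)$ and $G_\b$ is transitive on $\calD(\b)$, so by the local-transitivity criterion recorded before the definition, $G$ is flag-transitive, and Lemma~\ref{loc-pri1} gives that $G$ is point-primitive, hence faithful and quasiprimitive on $\calP$. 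For the block side, let $K$ be the kernel of $G$ on $\calB$; as $K\trianglelefteq G$ and $G$ is point-quasiprimitive, $K\ne1$ would force $K$ to be transitive on $\calP$, whence the $K$-invariant set $\calD(\b)$ would equal $\calP$ for every block $\b$, contradicting $k<v$. Thus $K=1$ and $G$ acts faithfully on $\calB$ as well, so $G\cong G^\calB$.

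Next I pass to the incidence graph $\Ga=(\calP\cup\calB,\calF)$, which is connected (as $\lambda\ge1$) and $G$-locally primitive, and split on whether $G$ is quasiprimitive on $\calB$. If it is, then $G$ is faithful and quasiprimitive on each of the two biparts, so Theorem~1.2 of \cite{s-arc} applies: writing $X$ for the point-type and $Y$ for the block-type, the unordered pair satisfies $X=Y\in\{\HA,\AS,\TW,\PA\}$, or $\{X,Y\}\in\{\{\SD,\PA\},\{\CD,\PA\}\}$. Since the point action is primitive, I would then eliminate the types $\TW,\PA,\SD,\CD$ using the design identities $bk=vr$ and $\lambda(v-1)=r(k-1)$ together with Fisher's inequality $v\le b$: in each of these cases the product/diagonal decomposition of $\soc(G)$ on $\calP$ and on $\calB$ pins $v$ and $b$ down as specific powers of $|T|$ or $|\Delta|$, and I expect these values to be incompatible with the counting relations. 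What survives is $X=Y=\AS$, giving conclusion~(1) with $G^\calB$ quasiprimitive, or $X=Y=\HA$; since a quasiprimitive affine group is irreducible and therefore primitive, the latter is conclusion~(2)(i).

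If instead $G$ is not quasiprimitive on $\calB$, I take a minimal normal subgroup $N\trianglelefteq G$ that is intransitive on $\calB$ (one exists, since any larger block-intransitive normal subgroup contains a minimal one); by point-quasiprimitivity $N=T^m$ is transitive on $\calP$. Consider the $G_\a$-equivariant map $\calD(\a)\to\{N\text{-orbits on }\calB\}$, $\b\mapsto\b^N$; its fibres form a $G_\a$-invariant partition of $\calD(\a)$, so primitivity of $G_\a$ on $\calD(\a)$ makes it trivial. The one-part alternative would force $N$ to be block-transitive, a contradiction, so the fibres are singletons: the $r$ blocks through $\a$ meet $r$ distinct $N$-orbits, each $N$-orbit is a parallel class partitioning $\calP$ into blocks, and $N_\a$ fixes every block through $\a$. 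In particular $G_\a$ acts primitively on these $r$ parallel classes through the quotient $G_\a/N_\a$, which embeds in a section of $G/N$. I would use this, together with the resolvable structure just obtained, to exclude the possibility that $N$ is nonabelian (i.e.\ to rule out point-types $\AS,\TW,\SD,\CD,\PA$), leaving $N$ elementary abelian and regular, so $G$ is of type $\HA$ with $N=V=\FF_q^d$. Taking $\a$ to be the zero vector, the base block $\b$ is then invariant under $V_\b\le V$ and, since its parallel class partitions $V$, it is a single coset of $V_\b$, hence a flat; therefore every block is a flat and $\calD$ is a subdesign of $\AG_i(d,q)$ with $i=\dim V_\b$, which is conclusion~(2)(ii).

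The hard part will be the elimination of the product-action and diagonal types $\PA,\SD,\CD,\TW$ in the quasiprimitive-block case, and the parallel exclusion of the nonabelian-socle types (forcing $\HA$) in the non-quasiprimitive-block case; in the latter I expect the bound $r\le|G/\soc(G)|$ coming from the primitive action of $G_\a/N_\a$ on the $r$ parallel classes to do most of the work, being especially sharp in the almost simple case where $G/\soc(G)$ embeds in $\Out(\soc(G))$. Once the types are pinned down, I would exhibit one design realising each surviving case — an almost simple example for~(1), a primitive affine design for~(2)(i), and a proper $G$-invariant family of flats inside some $\AG_i(d,q)$ for~(2)(ii) — to show that the trichotomy is sharp.
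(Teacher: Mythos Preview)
Your architecture matches the paper's: establish flag-transitivity and point-primitivity via Lemma~\ref{loc-pri1}, split on whether $G^{\calB}$ is quasiprimitive, and in the quasiprimitive case invoke \cite{s-arc}. Your fibres argument in the non-quasiprimitive branch is essentially the opening of Lemma~\ref{lem:reg-subgp}, and the identification of blocks with cosets of a subspace once $N$ is known to be elementary abelian regular is exactly Lemma~\ref{HA-}. The substantive gap is in how you propose to eliminate the leftover point-types.

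In the block-quasiprimitive branch, the hope that counting relations rule out $\SD$ and $\CD$ on the point side is justified and is precisely Lemma~\ref{sdcd} (from $r\mid|T|^\ell$ one gets $\gcd(r,v-1)=1$, hence $r\mid\lambda$). But counting does not eliminate $X=\PA$. When both $G^{\calP}$ and $G^{\calB}$ are of product-action type there is no arithmetic obstruction of the kind you describe: $v$ and $b$ are just two powers $|\Omega|^m$ and $|\Sigma|^m$, and the identities $bk=vr$, $\lambda(v-1)=r(k-1)$, $b\geq v$ impose nothing beyond what they impose in the almost simple case. The paper instead proves the structural Lemma~\ref{product}: if $G=G_1\times G_2$ and $G_\alpha=L_1\times L_2$ with $L_i<G_i$, then no subgroup of $G_\b$ of the shape $K_1\times K_2$ (with $K_i\le G_i$) can be transitive on $\calD(\b)$. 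This comes from the coset formula for $\lambda$ in Lemma~\ref{lambda}, not from the global parameters, and it is what disposes of every block-type compatible with $X=\PA$ (Lemmas~\ref{PACD}, \ref{PASD}, \ref{PAPA}); the quasiprimitive-but-imprimitive $(\PA,\PA)$ subcase in Lemma~\ref{PAPA} even requires manufacturing, via Lemma~\ref{imprimitiveonB}, a product-decomposed normal subgroup $R\lhd G_\b$ to which Lemma~\ref{product} can be applied. Your outline contains no analogue of this ingredient.

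Two further specific inputs are missing. Eliminating a regular nonabelian minimal normal subgroup (types $\TW$, $\HS$, $\HC$) is not a counting matter either: the paper quotes \cite[Proposition~2.3]{tw}, which asserts that a normal subgroup regular on the points of a flag-transitive $2$-design is solvable (Lemma~\ref{twhshc}). And in the non-quasiprimitive branch, the step from ``$N_\alpha$ acts trivially on $\calD(\alpha)$'' to ``$N$ is regular on $\calP$'' is Zieschang's \cite[Proposition~2]{Zies1988}, not something you have derived. Your proposed bound $r\le|G/N|$ is an interesting alternative line for some of these cases (it is sharp for $\AS$, as you note), but for $\PA$ one only gets $r\le|\Out(T)|^m\cdot m!$, which need not be small relative to $v=|\Omega|^m$, and in any event the argument is not carried through.
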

	
	The examples for each case in Theorem \ref{MainThm} are given in Section \ref{exam}.
	
	\begin{corollary}\label{cor:sym-design}
		Let $\mathcal{D}=(\mathcal{P},\mathcal{B},\calI)$ be a $G$-locally primitive symmetric 2-design.
		Then $G$ is almost simple or affine, and acts primitively on both $\calP$ and $\calB$.
	\end{corollary}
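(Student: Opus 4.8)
The plan is to deduce the corollary from Theorem~\ref{MainThm} by exploiting the fact that, for symmetric designs, the whole hypothesis is self-dual. First I would apply Theorem~\ref{MainThm} directly to $\calD$: since $\calD$ is a $G$-locally primitive $2$-design, $G$ is flag-transitive and point-primitive, and $G$ is either almost simple or affine. This already delivers the ``almost simple or affine'' clause and primitivity on $\calP$; the only thing left to secure is primitivity on $\calB$, which will simultaneously rule out case~(2)(ii) and promote the quasiprimitivity in case~(1) of Theorem~\ref{MainThm}.

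The device is the dual design $\calD^{*}=(\calB,\calP,\calI^{*})$, obtained by interchanging the roles of points and blocks and reversing incidence. Here I would invoke the classical fact that the dual of a symmetric $2$-design is again a symmetric $2$-design with the same parameters; this is exactly where the hypothesis $|\calP|=|\calB|$ is indispensable, since for a non-symmetric $2$-design the dual need not be a $2$-design at all. I would then note that $\Aut(\calD)=\Aut(\calD^{*})$, so $G\le\Aut(\calD^{*})$ acts on $\calD^{*}$ with the neighborhoods preserved: for a $\calD^{*}$-point $\b$ (a block of $\calD$) one has $\calD^{*}(\b)=\calD(\b)$, and for a $\calD^{*}$-block $\a$ (a point of $\calD$) one has $\calD^{*}(\a)=\calD(\a)$. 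The local-primitivity condition, being the conjunction of ``$G_\a$ primitive on $\calD(\a)$'' and ``$G_\b$ primitive on $\calD(\b)$'', is therefore symmetric under this interchange and carries over verbatim, so $\calD^{*}$ is also $G$-locally primitive.

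Now I would apply Theorem~\ref{MainThm} a second time, to $\calD^{*}$. Its conclusion yields that $G$ is point-primitive on $\calD^{*}$; but a point of $\calD^{*}$ is a block of $\calD$, so this says precisely that $G$ is primitive on $\calB$. Combined with the first application, $G$ is primitive on both $\calP$ and $\calB$. In particular $G^{\calB}$ is primitive, hence quasiprimitive, so case~(2)(ii) of Theorem~\ref{MainThm} (where $G^{\calB}$ is non-quasiprimitive) cannot occur, and the corollary follows.

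The two applications are mutually consistent: the dichotomy between almost simple and affine is governed by $\soc(G)$, which is an invariant of $G$ as an abstract group and so is independent of whether $G$ is regarded as acting on $\calP$ or on $\calB$. The one step I would verify with care, and the only genuine subtlety, is that the second application is legitimate, i.e. that $G$ acts faithfully on $\calB$ so that Theorem~\ref{MainThm}'s analysis of the $\calD^{*}$-point action is valid. This is immediate for a symmetric design: since $\lambda<r$ when $k<v$, distinct points are incident with distinct sets of blocks, whence an automorphism fixing every block fixes every point and the kernel of $G\to\Sym(\calB)$ is trivial.
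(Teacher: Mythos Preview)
Your proposal is correct and follows essentially the same approach as the paper: apply Theorem~\ref{MainThm} to $\calD$ to obtain point-primitivity and the almost simple/affine dichotomy, then apply it again to the dual symmetric design to obtain block-primitivity. The paper's proof is terser but identical in strategy; your extra checks (that the dual is again a $2$-design, that local primitivity is self-dual, and that $G$ is faithful on $\calB$) are exactly the details the paper leaves implicit.
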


	Now, we mention a more general object, {\em $t$-design}, which is defined as a point-block incidence geometry $\calD=(\calP,\calB,\calI)$ such that each block is incident with a constant number of points, and any $t$ distinct points are incident with a constant number of blocks.
	It is well known that a $t$-design is a $(t-1)$-design if $t\ge 3$.
	For locally primitive $t$-designs ($t\geq3$), we have the following reduction result.

	\begin{corollary}\label{cor}
		Let $\mathcal{D}=(\mathcal{P},\mathcal{B},\calI)$ be a $G$-locally primitive $t$-design with $t \geqslant 3$.
		Then either $G$ is an almost simple group, or
		$t=3$ and $\mathcal{D}$ is a subdesign of $\AG_i(d,2)$.
	\end{corollary}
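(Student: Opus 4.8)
The plan is to derive the corollary from Theorem~\ref{MainThm} together with some elementary affine geometry over $\mathbb{F}_q$. I would begin with the reduction: since $t\ge 3$, iterating the fact that a $t$-design is a $(t-1)$-design shows that $\calD$ is a $2$-design, and it remains $G$-locally primitive for the same group $G$. Hence Theorem~\ref{MainThm} applies and shows that $G$ is flag-transitive and point-primitive, with $G$ either almost simple or affine. If $G$ is almost simple we are in the first alternative, so the entire content of the corollary lies in the affine case.

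Assume then that $G$ is affine, so $\calP=V=\mathbb{F}_q^{\,d}$, the translation group lies in $G$, and (by the affine part of Theorem~\ref{MainThm}) the blocks are affine subspaces, i.e. $\calD$ is a subdesign of $\AG_i(d,q)$ for some $i$ with $2\le i<d$. The heart of the argument is then a containment count. Suppose $q\ge 3$. A line $\ell$ of $V$ then has at least three points, so I can pick a \emph{collinear} triple spanning $\ell$, and (as $d\ge 3$ here) also an \emph{affinely independent} triple spanning a plane $\pi\supseteq\ell$. Because every block is a flat, a block contains the collinear triple iff it contains $\ell$, and contains the independent triple iff it contains $\pi$; the $3$-design property forces both counts to equal $\lambda_3$. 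Since the set of blocks containing $\pi$ is contained in the set of blocks containing $\ell$ and the two sets have equal size, they coincide: every block through $\ell$ contains $\pi$. As this holds for every plane $\pi\supseteq\ell$, and the planes through $\ell$ already cover all of $V$, each block through $\ell$ would have to equal $V$, contradicting $i<d$. Hence $q=2$.

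It remains to pin down $t=3$. Over $\mathbb{F}_2$ every line has exactly two points, so any three distinct points $x,y,z$ are affinely independent and span the unique plane $\{x,y,z,x+y+z\}$; this is exactly why $\AG_i(d,2)$ is a genuine $3$-design, and it confirms $\calD\subseteq\AG_i(d,2)$ with $t\ge 3$. To see that $t=3$ I would run the same containment count one dimension higher: if $\calD$ were a $4$-design, comparing four points spanning a plane $\pi$ with four points spanning a $3$-flat $\sigma\supseteq\pi$ would force every block through $\pi$ to contain $\sigma$; letting $\sigma$ range over the $3$-flats through $\pi$ makes each such block equal to $V$, which is impossible. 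Thus $\calD$ is a $3$-design but not a $4$-design, so $t=3$, completing the affine alternative.

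I expect the genuine obstacle to be the \emph{structural} input rather than these counts: one must verify that in the affine case the blocks really are flats, so that $\calD$ is a subdesign of $\AG_i(d,q)$. Theorem~\ref{MainThm} records this explicitly only in subcase~(2)(ii), where $G^{\calB}$ is non-quasiprimitive; in subcase~(2)(i), where $G^{\calB}$ is primitive affine, one must still show that a locally primitive $3$-design has flat blocks, or else rule this subcase out for $t\ge 3$ altogether. Once the block-as-flat description is secured, the reductions $q=2$ and $t=3$ are just the two short incidence counts sketched above.
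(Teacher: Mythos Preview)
Your reduction strategy is sound, and the incidence counts you sketch do work; but the concern you raise at the end is exactly the point you are missing, and the paper resolves it in one line that you did not find. The paper first notes (citing a standard result on $t$-designs) that a non-trivial $t$-design with $t\ge 3$ is never symmetric. Now in case~(2)(i) of Theorem~\ref{MainThm} the socle $N\cong\mathbb{Z}_p^d$ is regular on $\calP$ \emph{and} on $\calB$ (that is what ``$G^\calB$ primitive affine'' means), so $|\calP|=|N|=|\calB|$ and $\calD$ would be symmetric. Hence case~(2)(i) is impossible for $t\ge 3$, and in the affine alternative we are automatically in case~(2)(ii), where the blocks are flats of $\AG_i(d,q)$. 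So the ``structural input'' you were worried about is free.

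Your affine counts are correct but more elaborate than needed. For $q\ge 3$ the paper simply compares $\lambda_3$ with $\lambda_2$: a block, being a flat, contains the collinear triple $\{0,x,ax\}$ (with $a\in\FF_q\setminus\{0,1\}$) if and only if it contains $\{0,x\}$, forcing $\lambda_3=\lambda_2$, which contradicts $\lambda_2=\lambda_3(v-2)/(k-2)$. Likewise for $q=2$ the paper compares $\lambda_4$ with $\lambda_3$ via the coplanar quadruple $\{0,x,y,x+y\}$. Your two-triple (resp.\ two-quadruple) comparison followed by a covering argument reaches the same contradictions, but the direct $\lambda_s=\lambda_{s-1}$ trick avoids the extra step of varying $\pi$ (or $\sigma$) over all flats through a fixed smaller flat.
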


	The possible quasiprimitive types for locally primitive $t$-designs are listed in Table \ref{type}, in which we denote by ``$-$'' the type of non-quasiprimitive action.

	\begin{table}[hbt]
		\newcommand{\tabincell}[2]{\begin{tabular}{@{}#1@{}}#2\end{tabular}}
		\centering\normalsize
		\caption{$G$-locally primitive $ t $-design $\mathcal{D}=(\mathcal{P},\mathcal{B},\calI)$}\label{type}
		\scalebox{0.8}[0.8]{
			\begin{tabular}{cccc}
				\toprule[1.5pt]
				$(G^{\mathcal{P}},G^{\mathcal{B}})$ & \tabincell{c}{Possible \\ Types} & Properties \\
				\midrule[1pt]
				$(X,Y)$ & \tabincell{c}{$ (\HA,\HA) $ \\ $ (\AS,\AS) $} & \tabincell{c}{ symmetric,\, $t=2 $ \\ $t\leqslant 6 $}   \\
				\midrule[1pt]
				$(X,-)$ & $ (\HA,-) $ & \tabincell{c}{ non-symmetric,\, $t=2 \text{\,or\,} 3 $ \\ $\mathcal{D}\subseteq \AG_{i}(d,q)$}\\
				\bottomrule[1.5pt]
		\end{tabular}}
	\end{table}
	
	By the reduction given in Theorem~\ref{MainThm}, the following problem naturally occurs.

	\begin{problem}\label{prob:AS-HA}
		{\rm
			Classify $G$-locally primitive $2$-designs where $G$ is an almost simple group or an affine group acting on both points and blocks.
		}
	\end{problem}

	As a continuation of the present paper, we are now preparing for the classification of locally primitive automorphism groups with socle an alternating group, sporadic simple group or simple group of Lie type.
	The study of affine case is on schedule and some results are obtained.
	Besides, as a subfamily of locally primitive 2-designs, a complete classification of locally 2-transitive 2-designs is also in progress.

	The paper is organized as follows.
	In Section~\ref{flag}, we study the connection of incidence graphs, coset graphs and 2-designs.
	In Section~\ref{SectionPre}, we collect some necessary notation, definitions, and some fundamental results, including O'Nan-Scott-Praeger Theorem.
	Then prove a signification lemma for the proof of the main theorem, namely, Lemma \ref{product}.
	In Section~\ref{exam}, we present some examples of 2-designs which are locally primitive.
	In Section~\ref{Primitivity on points and blocks}, we study the primitivity of points and blocks. We give Lemma~\ref{loc-pri1}, which shows that local primitivity implies point-primitivity so that we can use O'Nan-Scott-Praeger Theorem to analyse the automorphism groups of designs.
	In Section~\ref{A reduction}, we prove that the primitive action type of the automorphism groups on points must be affine, almost simple, or product action.
	Finally, we determine the affine case in Section~\ref{X-}, and exclude the product action case in Section~\ref{sec:PA}. Then, the complete proofs of our main results are given in the last Section~\ref{Proof of the main theorem}.

	\section{Incidence coset graphs}\label{flag}

	As pointed out in Introduction, the incidence graph of a flag-transitive design is bipartite and edge-transitive.
	Bipartite edge-transitive graphs can be expressed as coset graphs.
	In the following we shall discuss its connection with 2-designs.
	For more information about coset graphs, one can see \cite[Section 3.2]{s-arc}.

	Given a group $G$ and two subgroups $L,R\le G$, the {\it coset graph} $\Cos(G,L,R)$ is the graph with vertex set $V=[G:L]\cup[G:R]$ where $[G:L]$ and $[G:R]$ are the bipartite parts.
	The adjacency relations is
	\[Lx\sim Ry \Longleftrightarrow Lx\cap Ry\not=\emptyset,\ \mbox{for all $x,y\in G$.}\]
	Clearly, the vertex $L$ is adjacent to the vertex $R$. Note that $G$ acts on the two bipartite parts by the usual right multiplication.
	If $L\cap R$ is core-free in $G$, then $G\leq\Aut(\Gamma)$.
	Further, $\Gamma$ is $G$-edge-transitive and vertex-intransitive with the bipartite parts
	$[G:L]$, $[G:R]$ the two vertex-orbits.

	For convenience, we usually denote the vertices $L,R$ by $\a,\b$, respectively. So  $G_\a=L$ and $G_\b=R$.

	\begin{lemma}\label{lem:coset-g}
		Let $G$ be a finite group and $L,R$ be subgroups of $G$ such that $L\cap R$ is corefree in $G$, and let $\Ga=\Cos\,(G,L,R)$.
		Set $\a=L\in[G:L]$ and $g\in G$.
		Then $|\Ga(\a)\cap\Ga(\a^g)|={|(RL)\cap (RLg)|\over|R|}$.
	\end{lemma}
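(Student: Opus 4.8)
The plan is to translate the neighbourhood intersection $|\Ga(\a)\cap\Ga(\a^g)|$ entirely into the language of cosets and then count. In the coset graph $\Cos(G,L,R)$ the vertex $\a=L$ has neighbours consisting of those right cosets $Ry$ with $L\cap Ry\neq\emptyset$. Since $L\cap Ry\neq\emptyset$ means $Ry=Rl$ for some $l\in L$, the neighbour set $\Ga(\a)$ is precisely $\{Rl : l\in L\}$, i.e.\ the image of $L$ under the natural projection $G\to[G:R]$. More conveniently, I would identify $\Ga(\a)$ with the set of $R$-cosets contained in the union $RL=\bigcup_{l\in L}Rl$, so that $|\Ga(\a)|=|RL|/|R|$.

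Next I would compute $\Ga(\a^g)$. Because $G$ acts by right multiplication, $\a^g=Lg$, and its neighbours are the $R$-cosets meeting $Lg$; these are exactly the cosets $Rlg$ with $l\in L$, i.e.\ the $R$-cosets contained in $RLg$. So the first key step is the clean identification
\[
\Ga(\a)=\{\,Ry : Ry\subseteq RL\,\},\qquad \Ga(\a^g)=\{\,Ry : Ry\subseteq RLg\,\}.
\]

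The heart of the argument is then a counting identity: an $R$-coset $Ry$ lies in $\Ga(\a)\cap\Ga(\a^g)$ iff $Ry\subseteq RL$ and $Ry\subseteq RLg$, equivalently $Ry\subseteq (RL)\cap(RLg)$. Here I would use that both $RL$ and $RLg$ are unions of $R$-cosets (being left-invariant under $R$), hence so is their intersection; therefore the number of $R$-cosets inside $(RL)\cap(RLg)$ is exactly $|(RL)\cap(RLg)|/|R|$. Each such $R$-coset corresponds to exactly one common neighbour, and the correspondence is a bijection, so
\[
|\Ga(\a)\cap\Ga(\a^g)|=\frac{|(RL)\cap(RLg)|}{|R|},
\]
which is the claim. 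The corefreeness of $L\cap R$ is what guarantees $G\le\Aut(\Ga)$ so that the $G$-action is well defined and $\a^g$ makes sense as a vertex; it plays no role in the counting itself.

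I expect the only subtle point to be verifying that distinct common neighbours correspond to distinct $R$-cosets with no overcounting—i.e.\ that the map sending a common neighbour to the corresponding coset in $(RL)\cap(RLg)$ is genuinely a bijection rather than merely a surjection. This reduces to the elementary observation that $RL$ (and $RLg$) is a disjoint union of full $R$-cosets, so counting its cardinality and dividing by $|R|$ counts cosets exactly once each. No genuine obstacle arises beyond bookkeeping; the essential content is the reformulation of neighbourhoods as unions of $R$-cosets.
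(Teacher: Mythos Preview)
Your proposal is correct and follows essentially the same approach as the paper: both identify $\Ga(\a)$ with the set of right $R$-cosets contained in $RL$ and $\Ga(\a^g)$ with those contained in $RLg$, then count the common cosets by observing that $(RL)\cap(RLg)$ is a union of full $R$-cosets. The only cosmetic difference is that you derive $\Ga(\a)=\{Ry:Ry\subseteq RL\}$ directly from the adjacency definition $L\cap Ry\neq\emptyset$, whereas the paper obtains it via edge-transitivity as the orbit $\b^{G_\a}=R^L$; both routes are immediate and lead to the same counting argument.
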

	\begin{proof}
		Since $\Gamma$ is $G$-edge-transitive, $G_\a$ is transitive on $\Ga(\a)$ and $G_{\a^g}$ is transitive on $\Ga(\a^g)$.
		Note that $\a$ is adjacent to $\b=R\in[G:R]$.
		We have that $\Gamma(\a)=\b^L$ and $\Gamma(\a^g)=(\b^L)^g$.
		Since we identify the vertices $\a$ and $\b$ with $L$ and $R$, respectively, we have that $\a^g=Lg$, and
		\[\begin{array}{rcl}
			\Gamma(\a)&=&\b^{L}=R^L=\{R y\,|\,y\in RL\},\\
			\Ga(\a^g)&=&(\b^{L})^g=\{R y\mid y\in L\}^g=\{R yg\mid y\in L\}=\{R x\mid x\in RL g\}.
		\end{array}\]
		The intersection $\Ga(\a)\cap\Ga(\a^g)$ has the form
		\[\begin{array}{rcl}
			\Ga(\a)\cap\Ga(\a^g)&=&\{R y\,|\,y\in RL\}\cap\{R y\,|\,y\in RL g\}\\
			&=&\{R y\,|\,y\in (RL)\cap (RLg)\}.
		\end{array}\]
		Note that $(RL)\cap (RLg)$ is a union of some right cosets of $R$, and  $|\Ga(\a)\cap\Ga(\a^g)|$ in the number of right cosets of $R$ contained in $(RL)\cap (RLg)$.
		It follows that $|\Ga(\a)\cap\Ga(\a^g)|={|(RL)\cap (RLg)|\over|R|}$, as stated.
	\end{proof}

	Let $\mathcal{D}=(\mathcal{P},\mathcal{B},\calI)$ be a $ G $-flag-transitive 2-design.
	Let $(\a,\b)$ be a flag with $\a\in\calP$, $\b\in\calB$.
	As the incidence graph $\Ga$ of $\calD$ is edge-transitive, vertex-intransitive, with bipartite parts $\calP$ and $\calB$, we have
	$$ \Gamma=\Cos\,(G,G_{\alpha},G_{\b}), $$
	with $\calP$ identified as $[G:G_\a] $, and $\calB$ identified as $[G:G_\b]$.
	Obviously, the graph $\Cos\,(G,G_{\alpha},G_{\b})$ is connected.
	Lemma~\ref{lem:coset-g} has the following improved version for incidence graph of a 2-design, which connects coset graphs with 2-designs.

	\begin{lemma}\label{lambda}
		Let $G$ be a finite group and $L,R$ be subgroups of $G$ such that $L\cap R$ is corefree in $G$, and let $\Ga=\Cos\,(G,L,R)$.
		Then the following statements are equivalent
		\begin{itemize}
			\item[{\rm(i)}] $\Ga$ is the incidence graph of a $G$-flag-transitive $2$-design;\vs
			\item[{\rm(ii)}] $|\Ga(\a_1)\cap\Ga(\a_2)|$ is a constant, for any $\a_1,\a_2\in[G:L]$;\vs
			\item[{\rm(iii)}] ${|RL\cap RLg|\over|R|}$ is a constant, for any element $g\in G\setminus L$.
		\end{itemize}
		Consequently, a 2-design $\calD=(\calP,\calB, \calI)$ satisfies ${|G_\b G_\a\cap G_\b G_\a g|\over|G_\b|}=\lambda$ for any flag $(\a,\b)$ and $g\in G\setminus G_{\a}$.
	\end{lemma}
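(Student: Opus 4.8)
The plan is to run the cycle of implications (i)$\Rightarrow$(ii)$\Rightarrow$(iii)$\Rightarrow$(i), leaning on two facts already in place: the coset graph $\Gamma=\Cos(G,L,R)$ is automatically $G$-edge-transitive with $G$ acting transitively on each bipart $[G:L]$, $[G:R]$ by right multiplication; and the Introduction's characterization that an incidence geometry is a $2$-design exactly when its codegree $|\Gamma(\alpha_1)\cap\Gamma(\alpha_2)|$ is a constant $\lambda$ over distinct points. The final displayed identity will then fall out by specialising the proven equivalence.

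For (i)$\Leftrightarrow$(ii), I would attach to $\Gamma$ the incidence geometry $\calD$ with point set $[G:L]$, block set $[G:R]$ and incidence given by adjacency. Since $\Gamma$ is $G$-edge-transitive, $\calD$ is $G$-flag-transitive; in particular $G$ is transitive on $[G:R]$, so every block meets the same number of points and block-regularity holds automatically. Hence $\calD$ is a $G$-flag-transitive $2$-design if and only if the only remaining defining condition holds, namely that any two distinct points lie on a constant number of blocks — which is precisely the constant-codegree statement (ii). This settles both implications simultaneously.

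For (ii)$\Leftrightarrow$(iii), the tool is Lemma~\ref{lem:coset-g}. Because $G$ is transitive on $[G:L]$ and preserves sizes of neighbour-intersections, any pair of distinct points $(\alpha_1,\alpha_2)$ can be translated by a suitable $h\in G$ to a pair $(\alpha,\alpha^g)$ with $\alpha=L$ and $\alpha^g=Lg$, and distinctness $\alpha_1\neq\alpha_2$ corresponds exactly to $Lg\neq L$, i.e. $g\in G\setminus L$; conversely every $g\in G\setminus L$ arises this way. By the formula $|\Gamma(\alpha)\cap\Gamma(\alpha^g)|=|RL\cap RLg|/|R|$ of Lemma~\ref{lem:coset-g}, the set of codegrees taken over distinct point-pairs coincides with the set of values $|RL\cap RLg|/|R|$ taken over $g\in G\setminus L$. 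Therefore one family is constant if and only if the other is, which is the desired equivalence.

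Finally, for the concluding sentence I would specialise to $L=G_\alpha$ and $R=G_\beta$ for a flag $(\alpha,\beta)$, so that $\Gamma=\Cos(G,G_\alpha,G_\beta)$ is the incidence graph of the given $2$-design; then (iii) holds and its constant value is the common codegree, which by definition is the parameter $\lambda$, yielding $|G_\beta G_\alpha\cap G_\beta G_\alpha g|/|G_\beta|=\lambda$ for every $g\in G\setminus G_\alpha$ and every flag. The main obstacle I anticipate is purely bookkeeping in (ii)$\Leftrightarrow$(iii): making the translation argument precise, confirming that the correspondence between distinct point-pairs and elements of $G\setminus L$ is exactly right — so the degenerate case $\alpha_1=\alpha_2$, which would contribute the valency $r$ rather than $\lambda$, is correctly excluded — and verifying that $G$-invariance of intersection sizes genuinely reduces every pair to one whose first coordinate is $L$.
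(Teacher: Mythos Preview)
Your proposal is correct and follows essentially the same approach as the paper: both rely on Lemma~\ref{lem:coset-g} for the passage between (ii) and (iii), and on the Introduction's codegree characterisation of $2$-designs for the link with (i). The only cosmetic difference is that the paper runs the cycle (i)$\Rightarrow$(ii)$\Rightarrow$(iii)$\Rightarrow$(i) rather than proving the two biconditionals directly, but the content of each step is identical to yours.
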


	\begin{proof}
		Firstly, assume that $\Ga$ is the incidence graph of a $G$-flag-transitive $2$-design $(\calP,\calB)$ such that $\calP=[G:G_\a]$.
		Then any two points $\a_1,\a_2\in\calP$ are incident with exactly $\lambda$ common blocks, and so $|\Ga(\a_1)\cap\Ga(\a_2)|=\lambda$ is a constant.
		Thus part~(i) implies part~(ii).

		Secondly, let $\a=L\in[G:L]$ and $g\in G\setminus L$, and let $\a_1=\a$, $\a_2=\a^g$.
		So $|\Ga(\a)\cap\Ga(\a^g)|$ is a constant.
		Then, by Lemma~\ref{lem:coset-g}, ${|RL\cap RLg|\over|R|}=|\Ga(\a)\cap\Ga(\a^g)|$ is a constant.
		Thus part~(ii) implies part~(iii).

		Finally, assume that ${|RL\cap RLg|\over|R|}=\lambda$ for any element $g\in G\setminus L$.
		By Lemma~\ref{lem:coset-g} and the transitivity of $G$ on $[G:L]$, we have $|\Ga(\a)\cap\Ga(\a')|=\lambda$ for any $\a'\in[G:L]$.
		Define an point-block incidence geometry $\calD=(\calP,\calB)$ with $\calP$ and $\calB$ the two biparts of $\Ga$, i.e., $\calP=[G:L]$ and $\calB=[G:R]$.
		Then, any two points $\a_1,\a_2\in\calP$ are incident with exactly the same number of blocks, that is, $\lambda$ blocks by the transitivity of $G$ on $[G:L]$.
		Thus the structure $\calD$ is a 2-design and $\Ga$ is the incidence graph of a $\calD$.
		The flag-transitivity of $G$ on $\calD$ follows from the edge-transitivity of $G$ on $\Gamma$.
	\end{proof}

	Note that the coset graph $\Cos\,(G,L,R)$ is a complete bipartite graph if and only if $G=LR$.
	Accordingly, a $G$-flag-transitive 2-design is trivial such that each of its blocks is incident with every point if and only if $G=G_\a G_\b$ for some $\a\in\calP$ and $\b\in\calB$.
	The incidence graph of such a trivial locally primitive 2-design is exactly a locally primitive complete bipartite graph.
	Such a classification had already been given in
	\cite{LocallyComplete}.

	We mention that the incidence graphs of $2$-designs have small diameters.
	Note that the distance between vertices $u$, $v$ is the length of shortest paths between $u$ and $v$.
	The diameter of a graph is the largest distance between vertices.
	
	\begin{proposition}\label{lem:diam}
		Let $\Ga$ be the incidence graph of a non-trivial $2$-design $\calD$.
		Then $\Ga$ is of diameter at most $4$.
		If further $\calD$ is symmetric, then $\Ga$ is of diameter $3$.
	\end{proposition}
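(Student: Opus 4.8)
The plan is to exploit the bipartite structure of $\Ga=(\calP\cup\calB,\calF)$ together with the defining property of a $2$-design that any two points are incident with exactly $\lambda\geq 1$ common blocks. Since $\Ga$ is bipartite with parts $\calP$ and $\calB$, two vertices in the same part are joined only by walks of even length and two vertices in different parts only by walks of odd length, so it suffices to bound separately the point--point, point--block, and block--block distances; the three bounds will in particular show that $\Ga$ is connected, so that the diameter is well defined. First I would observe that non-triviality of $\calD$ supplies a point $\a$ and a block $\b$ with $\a\not\sim\b$ (otherwise every point lies on every block and $\Ga$ is complete bipartite), whence $d(\a,\b)\geq 3$; this gives the lower bounds needed in both assertions.

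For the upper bound of $4$ I would argue as follows. Any two distinct points $\a_1,\a_2$ are incident with a common block because $\lambda\geq 1$, so $d(\a_1,\a_2)=2$. Given a point $\a$ and a block $\b$ with $\a\not\sim\b$, choose a point $\g\sim\b$ (blocks are non-empty in a non-trivial design); then $\a\neq\g$, and since $\lambda\geq 1$ some block $\d$ is incident with both $\a$ and $\g$, giving a path $\a,\d,\g,\b$ of length $3$, so $d(\a,\b)\leq 3$. Finally, for two blocks $\b_1,\b_2$ pick points $\g_i\sim\b_i$; if $\g_1=\g_2$ the blocks share a point and lie at distance $2$, and otherwise connecting $\g_1,\g_2$ through a common block yields a walk of length $4$ from $\b_1$ to $\b_2$. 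Hence $d(\b_1,\b_2)\leq 4$, and combining the three estimates gives $\diam(\Ga)\leq 4$.

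For the symmetric case the key extra input is the classical dual property: in a symmetric $2$-design any two distinct blocks meet in exactly $\lambda$ points (equivalently, the dual of $\calD$ is again a $2$-design with the same parameters). Since $\lambda\geq 1$, any two blocks then share a point and so lie at distance $2$; combined with the point--point bound of $2$ and the point--block bound of $3$ established above, this upgrades the estimate to $\diam(\Ga)\leq 3$. Together with the lower bound $d(\a,\b)\geq 3$ coming from a non-incident point--block pair, this forces $\diam(\Ga)=3$ exactly.

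The routine steps are the three distance estimates, which are short once the common-block property is in hand. The part that needs care is the symmetric case: I would make sure to invoke (or justify via the dual design) that two blocks of a symmetric $2$-design always intersect, and to confirm that non-triviality genuinely supplies a non-incident point--block pair so that the diameter cannot drop below $3$.
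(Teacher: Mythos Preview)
Your proposal is correct and follows essentially the same approach as the paper: both arguments bound point--point distances by $2$ via $\lambda\geq 1$, point--block distances by $3$ via a common block through two points, and block--block distances by $4$ via a path through two points and a shared block. Your treatment is in fact slightly more explicit than the paper's, since you spell out the lower bound $\diam(\Ga)\geq 3$ from a non-incident pair and justify the symmetric case via the dual property that any two blocks meet in $\lambda$ points, whereas the paper simply asserts that symmetric $\calD$ falls into the distance-$2$ case.
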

	\begin{proof}
		Let $\calD=(\calP,\calB,\calI)$.
		Then $\Ga$ is a bipartite graph with biparts $\calP$ and $\calB$.
		Let $d(\ ,\ )$ be the distance function.
		
		Note that any two points are of distance $ 2 $. Let $\a\in\calP$ and $\b\in\calB$ such that $\a\not\sim\b$.
		Let $\a'\in\calP$ such that $\a'\sim\b$, and let $\b'\in\Ga(\a)\cap\Ga(\a')$.
		Then $\a\sim\b'\sim\a'\sim\b$, and so $d(\a,\b)=3$.

		If any two blocks are of distance $ 2 $, then $\Ga$ has diameter equal to 3.
		In particular, symmetric $ \calD $ falls into this case. Otherwise, there exists $\b_1,\b_2\in\calB$ with  $d(\b_1,\b_2)>2$.
		Let $\a_1,\a_2\in\calP$ such that $\a_1\sim\b_1$ and $\a_2\sim\b_2$.
		Then there exist $\b\in\calB$ such that $\b\in\Ga(\a_1)\cap\Ga(\a_2)$, and so
		$\b_1\sim\a_1\sim\b\sim\a_2\sim\b_2$, and $d(\b_1,\b_2)=4$.
		Thus, in this case $\Ga$ is of diameter equal to 4.
	\end{proof}

	\section{Preliminaries}\label{SectionPre}

	In this section, we collect notations, definitions and basic properties about designs and their automorphisms.
	Moreover, we prove an important lemma (Lemma \ref{product}) for the proof of the main results of the current paper.

	Let us recall some basic properties of the parameters of a $2$-design $\calD=(\calP,\calB,\calI)$.
	
	Traditionally, let $v=|\calP|$, $b=|\calB|$, and denote by $k$ the number of points that a block is incident with.
	A 2-design is also called a $2$-$(v,k,\lambda)$ design.
	Then one can obtain
	
	$$vr=bk,$$
	by counting the number of flags $\mathcal{F}=\{(\alpha,\b)\in\calP\times\calB\,|\ \a \sim \b \}$ in two ways.
	Moreover, Fisher's inequality tells us that
	$$b \geqslant v,\ \mbox{and}\ r\ge k.$$
	For a given point $\alpha_0\in\mathcal{P}$, counting flags of the form $\{(\alpha,\b)\,|\,(\alpha_{0},\b)\in\mathcal{F},\alpha\ne\alpha_{0}\}$ in two ways gives rise to the equality:
	$$\lambda(v-1)=r(k-1).$$
	It follows that $ \lambda<r $, and $\lambda v=rk-r+\lambda<rk$.
	
	Given a $t$-$(v,k,\lambda_t)$ design $\mathcal{D}=(\mathcal{P}, \mathcal{B},\calI)$, it is not hard to show that $ \mathcal{D} $ is also an $ s $-$(v,k,\lambda_s)$ design, where $ s \leqslant t $, and in this case,  $\lambda_s=\lambda_t{{v-s \choose t-s}}/{{k-s \choose t-s}}$.
	
	Note that we say a permutation group $G$ on a set $\Omega$ is $s$-homogeneous if $G$ acts transitively on the $s$-subsets of $\Omega$, and is $s$-transitive if $G$ acts transitively on the ordered $s$-tuples of $\Omega$.
	By \cite[Theorems 1.1, 2.1]{t<=6}, a $G$-flag-transitive $t$-designs satisfies that $t\leqslant6$, and $G$ is $\lfloor \frac{t+1}{2} \rfloor$-homogeneous on the points.

	In the present paper, we denote by $G_\a$ (respectively, $G_\b$) the subgroup of $G$ stabilizing the point $\a$ (respectively, the block $\b$).
	The equality $\lambda(v-1)=r(k-1)$ implies that the automorphism group of a $G$-flag-transitive design has large stabilizers, which means that $|G| \leqslant |G_\a|^3.$

	\begin{lemma}\label{large-stab}
		Let $\mathcal{D}$ be $G$-flag-transitive $2$-$(v,k,\lambda)$ design, with a fixed flag $(\a,\b)$. Then \[|G|< {|G_\a|^3\over|G_{\a \b}|^2}.\]
	\end{lemma}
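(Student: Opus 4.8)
The plan is to translate every design parameter implicit in the statement into the order of a stabiliser via the orbit--stabiliser theorem, thereby collapsing the claimed inequality into a single numerical statement, and then to read that statement off from the parameter relations already recorded above. Since $\calD$ is $G$-flag-transitive, $G$ is transitive on $\calP$, on $\calB$, and on the flag set $\calF$. I would therefore begin by writing $|G|=v\,|G_\a|$ from the action on points, and $|G|=|\calF|\,|G_{\a\b}|$ from the action on flags, the stabiliser of the flag $(\a,\b)$ being $G_\a\cap G_\b=G_{\a\b}$. Using that the number of flags is $|\calF|=vr$, these combine to give $|G|=vr\,|G_{\a\b}|$.

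Next I would pin down $|G_\a|$ in terms of $r$ and $|G_{\a\b}|$. The stabiliser $G_\a$ acts transitively on the set $\calD(\a)$ of the $r$ blocks through $\a$, and the stabiliser of $\b$ inside $G_\a$ is again $G_\a\cap G_\b=G_{\a\b}$; hence $|G_\a|=r\,|G_{\a\b}|$. Substituting this into the target quantity yields
\[\frac{|G_\a|^3}{|G_{\a\b}|^2}=\frac{(r\,|G_{\a\b}|)^3}{|G_{\a\b}|^2}=r^3\,|G_{\a\b}|.\]
Comparing with $|G|=vr\,|G_{\a\b}|$ and cancelling the common positive factor $r\,|G_{\a\b}|$, the asserted inequality $|G|<|G_\a|^3/|G_{\a\b}|^2$ is seen to be equivalent to the purely combinatorial inequality $v<r^2$.

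Finally I would establish $v<r^2$ directly from the relations listed before the statement. We already have $\lambda v=rk-r+\lambda<rk$, the strict inequality coming from $\lambda<r$, while Fisher's inequality supplies $k\le r$; since moreover $\lambda\ge1$, these chain together as
\[v\le\lambda v<rk\le r^2,\]
which is exactly what is required. I expect the only delicate point to be the book-keeping of strictness: the strict bound $v<r^2$ rests on the strict bound $\lambda v<rk$, which in turn uses $\lambda<r$ and hence the non-degeneracy ($k<v$) built into a genuine $2$-design. The remaining work — identifying the stabiliser of $\b$ in $G_\a$ as $G_{\a\b}$ and checking $|\calF|=vr$ — is routine, so I do not anticipate a serious obstacle.
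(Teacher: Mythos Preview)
Your proof is correct and follows essentially the same route as the paper: both arguments use the orbit--stabiliser identifications $v=|G|/|G_\a|$, $r=|G_\a|/|G_{\a\b}|$, $k=|G_\b|/|G_{\a\b}|$, the strict inequality $\lambda v<rk$, and Fisher's inequality. The paper substitutes these directly to obtain $\lambda\,|G|<|G_\a|^2|G_\b|/|G_{\a\b}|^2\le |G_\a|^3/|G_{\a\b}|^2$, whereas you first reduce the claim to $v<r^2$ and then deduce that from $v\le\lambda v<rk\le r^2$; the two computations are the same inequality unpacked in slightly different orders.
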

	
	\begin{proof}
		As $\mathcal{D}$ is $G$-flag-transitive, we have $ v={|G|\over |G_\a|} $, $ b={|G|\over |G_\b|}\geqslant v $, $r={|G_\a|\over |G_{\a \b}|}$ and $ k={|G_\b|\over |G_{\a \b}|} $. Then by $ \lambda v<rk $, we have $\lambda\cdot|G|<{|G_\a|^2|G_\b|\over{|G_{\a \b}|^2}}\leqslant{|G_\a|^3\over|G_{\a \b}|^2}.$
	\end{proof}
	
	The bound given in Lemma~\ref{large-stab} allows us to use the tool called large subgroup described in \cite{large-subgp}, to classify locally primitive 2-designs with automorphism groups being almost simple, which will be done in an upcoming sequel, as mentioned in Introduction.

	The following lemma is well known.
	For completeness we provide a proof below.

	\begin{lemma}\label{faithful}	
		Let $\mathcal{D}=(\mathcal{P},\mathcal{B},\calI)$ be a non-trivial $2$-$(v,k,\lambda)$ design, and let $G \leqslant \Aut(\mathcal{D})$.
		Then $G$ acts faithfully on $\mathcal{B}$.
	\end{lemma}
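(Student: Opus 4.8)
The plan is to prove that the kernel $K$ of the action of $G$ on $\calB$ is trivial. I would start by fixing an arbitrary element $g\in K$ and an arbitrary point $\a\in\calP$. Since $g$ is an automorphism of $\calD$ and fixes every block, for each block $\b\in\calB$ we have $\a\sim\b$ if and only if $\a^g\sim\b^g=\b$; hence $\a$ and $\a^g$ are incident with exactly the same set of blocks, that is, $\calD(\a)=\calD(\a^g)$.

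Next I would compare the two sides numerically. Each of $\a$ and $\a^g$ is incident with exactly $r$ blocks, so $|\calD(\a)\cap\calD(\a^g)|=|\calD(\a)|=r$. Suppose for contradiction that $\a\neq\a^g$. Then $\a$ and $\a^g$ are two distinct points incident with $r$ common blocks, whereas the $2$-design axiom forces any two distinct points to be incident with exactly $\lambda$ common blocks; thus $r=\lambda$. This contradicts the inequality $\lambda<r$ established in the preliminaries, which in turn relies on non-triviality of $\calD$ (so that $v>k$, and hence $\lambda=r(k-1)/(v-1)<r$). Therefore $\a=\a^g$.

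Since $\a$ was arbitrary, $g$ fixes $\calP$ pointwise; as an automorphism of $\calD$ is determined by its action on the points, this gives $g=1$. Hence $K=1$ and $G$ acts faithfully on $\calB$. There is no serious obstacle here: the only point requiring care is to invoke non-triviality precisely where it guarantees $\lambda<r$, which is exactly what rules out two distinct points from sharing all $r$ of their blocks.
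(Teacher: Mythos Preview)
Your proof is correct and follows essentially the same approach as the paper's: both argue that an element fixing every block would force two distinct points to share all $r$ of their blocks, contradicting $\lambda<r$. The only cosmetic difference is that the paper supposes a nontrivial $g$ and picks a moved point, whereas you fix an arbitrary $\a$ and deduce $\a=\a^g$; the logical content is identical.
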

	
	\begin{proof}
		Suppose that $G$ is unfaithful on $\calB$.
		Then there exists some element $g\in G_{(\calB)}\setminus\{1\}$, namely, $g\ne 1$ fixes all the blocks in $\calB$.
		Since $G$ is faithful on $\mathcal{P}$, it follows that $g$ must move some points, say mapping $ \alpha_{1} $ to $ \alpha_{2} $. However, $g$ fixes all the blocks, so the $ r $ blocks incident with
		$\alpha_{1} $ will simultaneously be incident with $ \alpha_{2} $. It implies that $r\leqslant\lambda$, a contradiction. Thus $G$ is faithful on $\calB$.
	\end{proof}
	
	We focus on locally primitive designs $\calD=(\calP,\calB,\calI)$ in this paper, so we introduce a fundamental result --- \textit{O'Nan-Scott-Praeger Theorem} for primitive/quasiprimitive groups.
	
	Let $G$ be a transitive permutation group on a set $\Omega$.
	A partition $\Omega=\Delta_1\cup\dots\cup\Delta_\ell$ is said to be {\em $G$-invariant} if, for any $g\in G$ and any $\Delta_i$,
	\[\Delta_i^g=\Delta_i,\ \mbox{or}\ \Delta_i^g\cap\Delta_i=\emptyset.\]
	Then $G$ induces a transitive action on the set $\{\Delta_1,\dots,\Delta_\ell\}$.
	Clearly, there are two {\it trivial} $G$-invariant partition, namely, $|\Delta_i|=1$ or $|\Omega|$.
	If the two trivial partitions are the only $G$-invariant partitions, then $G$ is called {\it primitive}, otherwise,
	$G$ is called {\it imprimitive}.
	It is well known that {\it a transitive group $G\le\Sym(\Omega)$ is primitive if and only if the stabilizer of a point in $G$ is a maximal subgroup of $G$.}
	
	A transitive group $G$ on $\Omega$ is said to be \textit{quasiprimitive}, if every non-identity normal subgroup of $G$ is transitive on $ \Omega $.
	Obviously, a primitive group is quasiprimitive, but a quasiprimitive group is not necessarily primitive.
	A fundamental theorem in permutation group theory is the O'Nan-Scott-Praeger Theorem, which describes the structure and action of a primitive/quasiprimitive group; see \cite{quasi01} and \cite{quasi02}.
	
	Let $G\le\Sym(\Omega)$ be quasiprimitive, and take $ \o\in\Omega. $ Let $ N=\soc(G) $ be the {\em socle} of $G$, which is the product of the minimal normal subgroups.
	It is well known (see for example \cite[Theorem 4.3B]{DixonMortimer}) that either $G$ has a unique minimal normal subgroup, or $G$ has exactly two isomorphic, regular minimal normal subgroups.
	So $\soc(G)$ is a direct product of some isomorphic simple groups, namely,
	\[N=\soc(G)=T^m\,\,or\,\,\mathbb{Z}_p^m\]
	where $T$ is a nonabelian simple group and $m$ is a positive integer.

	According to the structures and the actions of $G$ and $N$, we have the following types.

	(I) $G$ has exactly two minimal normal subgroups:

	\begin{itemize}   		
		\item[(1)] {\em Holomorph Simple} (HS):\ $ N=T\times T $, and $ G\le T \rtimes \Aut(T)$;\vskip0.02in
		
		\item[(2)] {\em Holomorph Compound} (HC):\ $ N=T^{\ell}\times T^{\ell} $, and $G\le T^{\ell}\rtimes\Aut(T^{\ell})=(T^{\ell}\times T^{\ell}).\Out(T^{\ell})$.
	\end{itemize}

	(II) $G$ has exactly one minimal normal subgroup:

	\begin{itemize}
		\item[(3)] {\em Almost Simple} (AS):\ $N=T$, and $G\le\Aut(T)$;\vskip0.02in
		
		\item[(4)] {\em Holomorph Affine} (HA):\ $ N=\mathbb{Z}_{p}^{m} $ is regular on $\Omega$, and $G\le\AGL(m,p)$;\vskip0.02in
		
		\item[(5)] {\em Twisted Wreath product} (TW):\ $ N=T^{m} (m>1)$ is regular on $\Omega$, and $G=N \rtimes G_\o$;\vskip0.02in
		
		\item[(6)] {\em Simple Diagonal} (SD):\ $ N=T^{m}(m>1)$, and $N_\o\cong T$;\vskip0.02in
		
		\item[(7)] {\em Compound Diagonal} (CD):\ $ N=T^{m}(m>1)$, and $N_\o\cong T^{\ell}(\ell>1)$ with $\ell$ dividing $m$;\vskip0.02in
		
		\item[(8)] {\em Product Action type} (PA):\ $ N=T^{m}(m>1)$ is irregular on $\Omega$; in the primitive case, $ N_\o\cong H^{m}$, $H< T$.
	\end{itemize}
	
	The O'Nan-Scott-Praeger Theorem is stated as below.
	
	\begin{theorem}\label{thm:O'Nan-Scott}
		Let $G\le\Sym(\Omega)$ be quasiprimitive.
		Then $G$ is one of the eight types described above.
		Further, if $G$ is of type $\HA$, $\HS$ or $\HC$, then $G$ is primitive.
	\end{theorem}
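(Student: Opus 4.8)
The plan is to organise the argument entirely around the socle $N=\soc(G)$, whose coarse structure is already in hand: by the result quoted above (from \cite[Theorem 4.3B]{DixonMortimer}) the group $G$ has either one or two minimal normal subgroups, the two being isomorphic and regular in the latter case, and in all cases $N=T^m$ or $\mathbb{Z}_p^m$. I would first dispose of the case of \emph{two} minimal normal subgroups $N_1,N_2$. Quasiprimitivity forces each $N_i$ to be transitive, hence regular, and since distinct minimal normal subgroups intersect trivially and commute, one gets $N_1=C_G(N_2)=C_{\Sym(\Omega)}(N_2)$ by comparing orders; thus $N=N_1\times N_2$ with $N_1\cong N_2\cong T^\ell$ acting in the holomorph fashion, $G\le T^\ell\rtimes\Aut(T^\ell)$. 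Reading off $\ell$ then separates the two holomorph types: $\ell=1$ gives \HS\ and $\ell>1$ gives \HC.

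All remaining cases have a unique minimal normal subgroup $N$. If $N=\mathbb{Z}_p^m$ is abelian then, being transitive and abelian, it is regular and self-centralising, so $G/N$ embeds into $\GL(m,p)$ and $G\le\AGL(m,p)$; this is type \HA. Suppose instead $N=T^m$ is nonabelian. If $m=1$, then $C_G(N)=1$, for a nontrivial centraliser would be a normal subgroup meeting the unique minimal normal subgroup $N$ trivially, contradicting uniqueness; hence $G\le\Aut(T)$ and we are in type \AS, and this also absorbs the degenerate situation in which a simple $N=T$ happens to act regularly. If $m>1$ and $N$ is regular, then $G=N\rtimes G_\o$ realises $N$ as a twisted wreath product, giving type \TW.

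The heart of the proof, and the step I expect to be the main obstacle, is the case $N=T^m$ nonabelian, irregular, with $m>1$. Here I would first use minimality of $N$ to see that $G$ permutes the $m$ simple direct factors transitively, and then analyse the stabiliser $N_\o$ as a subgroup of $T^m$ via Scott's lemma on subgroups of products of nonabelian simple groups. If every coordinate projection of $N_\o$ is all of $T$, then $N_\o$ is subdirect, hence a direct product of full diagonal subgroups attached to a $G$-invariant partition of the $m$ factors into $\ell$ equal blocks, so $N_\o\cong T^\ell$ with $\ell\mid m$; the case $\ell=1$ is \SD\ and $\ell>1$ is \CD. If instead some projection is a proper subgroup $H<T$, then transitivity on the factors forces every projection to be a copy of $H$, and one obtains the product action with $N_\o\cong H^m$, namely type \PA. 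Verifying that these three alternatives are exhaustive, and controlling the overgroup $G$ and not merely its socle in each, is exactly where the subdirect-product bookkeeping is delicate.

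Finally, for the supplementary claim I would show that quasiprimitivity already forces primitivity in types \HA, \HS, \HC\ by proving $G_\o$ maximal, equivalently that the only $G_\o$-invariant subgroups of $N$ containing $N_\o$ are $N_\o$ and $N$ (a Dedekind-law argument then yields maximality). In the affine case this reduces to the remark that a normal subgroup of $G$ inside $\mathbb{Z}_p^m$ is precisely a $G_\o$-invariant subspace, so quasiprimitivity is equivalent to irreducibility of $G_\o\le\GL(m,p)$, and hence to primitivity. For the holomorph types the key point is that the relevant diagonal subgroup is maximal \emph{among the $G_\o$-invariant} subgroups of $N$: although intermediate subgroups between the diagonal and $T^\ell\times T^\ell$ exist when $\ell>1$, none of them is invariant under $G_\o$, which permutes the $\ell$ factors transitively. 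Assembling these observations gives primitivity in all three holomorph types, completing the plan.
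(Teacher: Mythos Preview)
The paper does not prove this theorem. Theorem~\ref{thm:O'Nan-Scott} is stated as a quotation of the O'Nan--Scott--Praeger Theorem, with the proof delegated to Praeger's papers \cite{quasi01,quasi02}; immediately after the statement the paper moves on to Lemma~\ref{product} without any argument. So there is no ``paper's own proof'' to compare your proposal against.

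That said, your outline follows the standard route to the quasiprimitive O'Nan--Scott Theorem and is largely sound. One point to tighten: in the irregular $N=T^m$ case with $m>1$, your dichotomy via Scott's lemma is correct in spirit, but the conclusion ``$N_\omega\cong H^m$'' in the non-subdirect branch overstates what you can deduce. Transitivity of $G$ on the simple factors only forces the coordinate projections of $N_\omega$ to be pairwise isomorphic copies of some $H<T$; it does not force $N_\omega$ to equal the full product $H^m$. The paper itself flags this by writing ``in the primitive case, $N_\o\cong H^m$'' in its description of type \PA. For the classification this is harmless---one simply \emph{defines} \PA\ as the residual case---but your sketch should not claim the product structure of $N_\omega$ in the merely quasiprimitive setting. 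Your primitivity arguments for \HA, \HS, \HC\ are on the right track; for \HC\ the key fact you use (that the subgroups between the diagonal and $T^\ell\times T^\ell$ are indexed by subsets of $\{1,\dots,\ell\}$, hence none is $G_\omega$-invariant except the extremes) is correct and would benefit from being stated explicitly.
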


	In the following we give a lemma that plays a key role in the proof of the main results.
	We believe that it could be important in the study of some other certain flag-transitive designs.
	
	\begin{lemma} \label{product}
		Let $\mathcal{D}$ be a $G$-flag-transitive non-trivial $2$-design, with a flag $(\alpha,\b)$ with $\a\in\calP$, $\b\in\calB$.
		Assume that both $G$ and $G_\alpha$ can be decomposed into direct products as
		$$G=G_1 \times G_2,\ G_{\alpha}=L_1 \times L_2=L,\ \text{\ where\ } L_i< G_i. $$
		Assume further that $K=K_1 \times K_2$ is a subgroup of $G_\b$ with $K_1\leqslant G_1$ and $K_2\leqslant G_2$.
		Then $K$ is intransitive on $\calD(\b)$.
	\end{lemma}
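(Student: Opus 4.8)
The plan is to argue by contradiction: assume $K$ is transitive on $\calD(\b)$ and deduce that $\calD$ is trivial. Since $G_\a=L_1\times L_2$, the point set is $\calP=[G{:}L]\cong\Omega_1\times\Omega_2$, where $\Omega_i=[G_i{:}L_i]$ and $G=G_1\times G_2$ acts coordinatewise; write $\a=(\a_1,\a_2)$ for the base point, so $\a_i\in\Omega_i$. Because $L_i<G_i$ we have $|\Omega_i|\ge 2$, which will guarantee that the point pairs used below exist. As $(\a,\b)$ is a flag, $\a\in\calD(\b)$, so if $K$ were transitive then $\calD(\b)=\a^K$. Since $K=K_1\times K_2$ acts as an independent product, $\a^K=\a_1^{K_1}\times\a_2^{K_2}=A\times B$, a combinatorial rectangle, where $A=\a_1^{K_1}\subseteq\Omega_1$ and $B=\a_2^{K_2}\subseteq\Omega_2$.

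First I would globalise this rectangle shape. Flag-transitivity yields block-transitivity, so every block is $\b^g$ for some $g=(g_1,g_2)\in G$, and $\calD(\b^g)=\calD(\b)^g=A^{g_1}\times B^{g_2}$; thus \emph{every} block is a rectangle. As $(g_1,g_2)$ ranges over $G_1\times G_2$, the pair $(A^{g_1},B^{g_2})$ ranges independently over $\mathscr A\times\mathscr B$, where $\mathscr A=A^{G_1}$ and $\mathscr B=B^{G_2}$. The assignment $\b'\mapsto(\pi_1\calD(\b'),\pi_2\calD(\b'))$ maps $\calB$ onto $\mathscr A\times\mathscr B$, is equivariant for the (transitive) $G$-action on both sides, and hence has constant fibre size; so, up to a uniform multiplicity that cancels in all ratios below, the block set corresponds to $\mathscr A\times\mathscr B$, with $X\times Y$ incident to $(p_1,p_2)$ exactly when $p_1\in X$ and $p_2\in Y$.

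Then I would run the standard $2$-design count on three kinds of point pairs. Let $r_A$ (resp.\ $r_B$) be the number of members of $\mathscr A$ (resp.\ $\mathscr B$) through a given point of $\Omega_1$ (resp.\ $\Omega_2$); these are constants because $G_1$ (resp.\ $G_2$) is transitive on $\Omega_1$ (resp.\ $\Omega_2$) and permutes $\mathscr A$ (resp.\ $\mathscr B$). For a pair sharing the second coordinate, the number of blocks through it is $\lambda_A(p_1,q_1)\,r_B$, where $\lambda_A(p_1,q_1)=|\{X\in\mathscr A : \{p_1,q_1\}\subseteq X\}|$; as this equals the constant $\lambda$, the quantity $\lambda_A$ is itself constant. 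Symmetrically $\lambda_B$ is constant, and for a pair differing in both coordinates the count is $\lambda_A\lambda_B=\lambda$. Comparing the three expressions gives $r_A\lambda_B=\lambda_A\lambda_B=\lambda_A r_B=\lambda$; since $\lambda_A,\lambda_B\ge 1$ (as $\lambda_A\lambda_B=\lambda\ge 1$) the cancellations are legitimate and yield $r_A=\lambda_A$ and $r_B=\lambda_B$.

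Finally, $r_A=\lambda_A$ forces every member of $\mathscr A$ meeting $\Omega_1$ to equal $\Omega_1$: all $r_A$ members through a point must also contain every other point, so $A=\Omega_1$; symmetrically $B=\Omega_2$. Hence $\calD(\b)=\Omega_1\times\Omega_2=\calP$, i.e.\ $k=v$, so $\calD$ is trivial, contradicting the hypothesis. The delicate point is the globalisation step: one must combine block-transitivity with the product shape $K=K_1\times K_2$ to see that the whole design, not merely one block, factorises through $\mathscr A\times\mathscr B$. Once this product structure is in hand, the $2$-design equations rigidly force the degenerate conclusion; the only care needed at the boundary is that $|\Omega_i|\ge 2$ (which is exactly $L_i<G_i$) and that $\lambda_A,\lambda_B>0$.
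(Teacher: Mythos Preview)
Your proof is correct and reaches the same contradiction ($k=v$, i.e.\ the design is trivial), but the route is genuinely different from the paper's. The paper argues algebraically via the coset formula of Lemma~\ref{lambda}: from $K$ transitive on $\calD(\b)$ it gets $G_\b G_\a=KL=K_1L_1\times K_2L_2$, then plugs the elements $(1,y)$ and $(x,y)$ (with $x\in G_1\setminus L_1$, $y\in G_2\setminus L_2$) into the identity $\lambda=|KL\cap KLg|/|G_\b|$; the product structure factorises the intersection and forces $|K_1L_1|=|K_1L_1\cap K_1L_1x|$, hence $G_1=K_1L_1$ and symmetrically $G_2=K_2L_2$, so $G=G_\b G_\a$. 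Your argument is instead combinatorial/geometric: you observe that every block is a rectangle $A^{g_1}\times B^{g_2}$, globalise this via the equivariant surjection $\calB\to\mathscr A\times\mathscr B$, and then compare the value of $\lambda$ on pairs differing in one coordinate versus both to obtain $r_A=\lambda_A$, $r_B=\lambda_B$, whence $A=\Omega_1$, $B=\Omega_2$. The paper's approach has the advantage of reusing the machinery already set up in Section~\ref{flag} and being a short symbolic calculation; yours is self-contained, needs no coset-graph lemma, and makes the underlying combinatorial obstruction (a nontrivial $2$-design cannot have rectangular blocks on a product point set) completely transparent.
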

	
	\begin{proof}
		Suppose that $K=K_1 \times K_2\le G_\b$ is transitive on $\calD(\b)$, where $ K_i \leqslant G_i.$
		Then $G_\b=KG_{\b\a}$, and hence $G_\b G_{\alpha}=KG_{\b\a}G_\a=KG_\a=KL$.
		Take $x \in G_1 \setminus L_1$ and $y \in G_2 \setminus L_2$. Then for $(1,y),(x,y)\in G \setminus G_{\alpha}$, by Lemma~\ref{lambda},
		$$\lambda=\frac{|KL  \cap KL (1,y)|}{|G_\b|}=\frac{|KL  \cap KL (x,y)|}{|G_\b|}.$$
		Thus $|KL\cap KL (1,y)|=|KL \cap KL (x,y)|$.
		Now
		\[\begin{array}{rcl}
			KL&=&(K_1 \times K_2)(L_1 \times L_2)=K_1 L_1 \times K_2 L_2,\\
			
			KL (1,y)&=&K_1 L_1 \times K_2 L_2y,\\
			
			KL (x,y)&=&K_1 L_1x \times K_2 L_2y.
		\end{array}\]
		Therefore, we have that
		\[\begin{array}{l}
			KL\cap KL (1,y)=K_1L_1\times(K_2L_2\cap K_2L_2y),\\
			KL\cap KL (x,y)=(K_1L_1\cap(K_1L_1x))\times(K_2L_2\cap K_2L_2y).
		\end{array}\]
		Since $|KL\cap KL (1,y)|=|KL \cap KL (x,y)|$, we have that
		$$|K_1 L_1| \times |K_2 L_2  \cap K_2 L_2 y|=|K_1 L_1 \cap K_1 L_1 x| \times |K_2 L_2 \cap K_2 L_2 y|. $$
		Hence $|K_1 L_1|=|K_1 L_1 \cap K_1 L_1 x|$, and so $x\in K_1L_1$.
		It follows that $G_1=K_1 L_1$.
		
		Replacing $(1,y)$ by $(x,1)$, the argument shows that  $G_2=K_2 L_2$.
		Thus
		\[G=G_1\times G_2=K_1 L_1\times K_2L_2=(K_1\times K_2)(L_1\times L_2)=KL=G_\b G_\a.\]
		Then $|G|=|G_\b G_\a|={|G_\a||G_\b|\over|G_\b\cap G_\a|}$, and so
		\[b={|G|\over|G_\b|}={|G_\a|\over|G_\b\cap G_\a|}=k,\]
		which is a contradiction since $\calD$ is a non-trivial 2-design.
		Therefore, $K$ is not transitive on $\calD(\b)$.
	\end{proof}

	\section{Examples}\label{exam}

	In this section, we present some examples of locally primitive designs.  In fact,  we note that every flag-transitive $2$-$(v,k,\lambda)$ design with  $k$ and $r$ both prime is clearly an example of locally primitive 2-design (some examples of almost simple type can be found in \cite{Alavirprime}).
	
	In the following, examples for each possible type described in Theorem \ref{MainThm} are presented.
	To be specific, in each of these examples, a locally primitive group $G$ is one of the types: (1) $G$ is almost simple; (2) both $G^\calP$ and $G^\calB$ are affine groups; and (3) $G^\calP$ is an affine group but $G^\calB$ is non-quasiprimitive.

	\begin{example}\label{ex:AS-AS}{\rm (Projective space $\PG(d,q)$)
			
			Let $V=\FF_q^{d+1}$ be a vector space of dimension $d+1\ge3$ over field $\FF_q$.
			Let
			\[\begin{array}{l}
				\calP=\{\mbox{1-subspaces of $V$}\},\\
				\calB_i=\{\mbox{$(i+1)$-subspaces of $V$}\},
			\end{array}\]
			where $2\le i+1\le d-1$.
			Let $\calD_i=(\calP,\calB_i,\calI)$ and $\calD_i'=(\calP,\calB_i,\calI')$ be incidence geometries with incidence relations defined as follows, for $(\a,\b)\in(\calP,\calB_i)$,
			\[\begin{array}{l}
				\a\overset{\calI}{\sim}\b\Longleftrightarrow \a\subseteq\b,\\
				\a\overset{\calI'}{\sim}\b\Longleftrightarrow \a\nsubseteq\b.
			\end{array}\]
			The design $\calD_i'$ is called the \textit{complement} of $\calD_i$.

			The number of points is $|\calP|={q^{d+1}-1\over q-1}$.
			The cardinality $|\calB_i|$ is equal to the number of $(i+1)$-subspaces of $V=\FF_q^{d+1}$, which is denoted by $\left[ {d+1 \atop i+1} \right]_q$ and called the Gaussian coefficient.
			We have
			$$\left[ {d+1 \atop i+1} \right]_q
			=\frac{(q^{d+1}-1)(q^{d+1}-q)\cdots(q^{d+1}-q^{i})}{(q^{i+1}-1)(q^{i+1}-q)\cdots(q^{i+1}-q^{i})}.$$
			Then $\calD_i$ is a $2$-$\left({q^{d+1}-1\over q-1}, {q^{i+1}-1\over q-1}, \left[ {d-1 \atop i-1} \right]_q\right)$ design with $r=\left[ {d \atop i} \right]_q$ ($\lambda=1$ if $i=1$), admitting $G=\PGL_{d+1}(q)\le\Aut(\calD_i)=\Aut(\calD_i')$ as a locally primitive automorphism group
			with $G^\calP$ and $G^\calB$ both almost simple (i.e., $( G^\calP ,G^\calB)$ is of type $ (\AS,\AS) $).
			We notice that the design $\calD_i$ is usually denoted by $\PG_{i}(d,q)$.
			The complement $\calD_i'$ of $\calD_i$ is a 2-$\left({q^{d+1}-1\over q-1},{q^{d+1}-q^{i+1}\over q-1},b-2r+\lambda\right)$ design. But $G$ is not necessarily locally primitive on $\calD_i'$.

			In particular,
			if $(d,q,i)=(2,2,1)$, then $\calD=\PG_1(2,2)$ is the well-known Fano plane, a 2-$(7,3,1)$ design. The automorphism group $G=\Aut(\calD)=\PGL_3(2)$ has a subgroup $H=\mathbb{Z}_7 \rtimes \mathbb{Z}_3$ acting locally primitive, with $G^\calP$ and $G^\calB$ both affine (i.e., $(H^\calP ,H^\calB)$ is of type $ (\HA,\HA) $).
			However, the complement $\calD'$ is a 2-$(7,4,2)$ design, and is not $H$-locally primitive.

			In addition, if $(d,q,i)=(3,2,1)$, then $\calD=\PG_1(3,2)$ is a non-symmetric 2-$(15,3,1)$ design.
			This design admits two locally primitive automorphism groups, which are $\PGL_4(2)$ and the alternating group $A_7$.
			These two groups are both almost simple, acting 2-transitively on $\calP$.
			Besides, $A_7$ is also locally primitive on $\PG_2(3,2)$ (a 2-(15,7,3) symmetric design) by the same 2-transitive action.
			
		}
	\end{example}

	

	\noindent\textbf{Remark.}
	The above shows that the Fano plane admits two locally primitive automorphism groups $\PGL_3(2)$ and $\mathbb{Z}_7 \rtimes \mathbb{Z}_3$ of \textit{different types}. The former one is almost simple, whereas the latter one is an affine group on both $\calP$ and $\calB$.
	Besides, we give an example of an almost simple group with the socle an alternating group $A_7$ acting locally primitively on both a symmetric design and a non-symmetric design.

	\begin{example}\label{AG}{\rm (Affine space $\AG(d,q)$)}

		{\rm
			
			Let $V=\mathbb{F}_q^d$ be an vector space with $q$ a prime power. Let
			\[\begin{array}{l}
				\calP=\{ v\, |\, v \in V \},\\
				\calB_i=\{ U+v \ |\ v \in V \text{ and } U \text{ is any $i$-subspace of $V$}\}.
			\end{array}\]
			Denote by $\AG_i(d,q)$ the incidence geometry $(\calP,\calB_i,\calI)$ with incidence relation defined as inclusion. In this geometry, each block is a coset of some $i$-subspace $U$ of $V$.
			Now, if $i\geq2$, then $\AG_i(d,q)$ is a $2$-$(q^d,q^i,\left[ {d-1 \atop i-1} \right]_q)$ design.
			If $i=1$, then $\lambda=1$.

			Note that $G=\AGL_d(q)$ is primitive on $\calP=V$. Meanwhile, since $\GL(V)$ is transitive on the set of $i$-subspaces, we conclude that $G$ is transitive on $\calB$, the set of all cosets of $i$-subspaces.
			Moreover, the point stabilizer $G_0=\GL_d(q)$ is primitive on the set of all $i$-subspaces of $V$, and for an $i$-subspace $U<V$, we have that
			\[G_U^U\cong \widehat{U} \rtimes \GL(U)\cong\AGL(U),\]
			which is primitive on $U$.
			Hence $G=\AGL_d(q)$ is locally primitive on $\AG_i(d,q)$.
			Note that $\widehat{V}\lhd G$ is intransitive on $\calB$, and
			the $G$-invariant block-partition is just the natural parallelism of $\AG_i(d,q)$.
			Thus, $G^\calP$ is an affine group and $G^\calB$ is non-quasiprimitive (i.e., $(G^\calP ,G^\calB)$ is of type $ (\HA,-) $).

			When $q=2$ and $i>1$, $\AG_i(d,q)$ is even a 3-design since any triple on $\mathcal{P}$ corresponds to a $2$-subspace of $V$ by a translation.

		}
	\end{example}
	
	In the following, we provide an example in the case Theorem \ref{MainThm} (2), that is, $\calD$ is a proper subdesign of $\AG_2(2m,q)$ and admits a locally primitive automorphism group $G$ with $G^\calB$ non-quasiprimitive.

	\begin{example}
		{\rm (A proper subdesign of $\AG_2(2m,q)$)
			
			Let $V=\mathbb{F}_q^{2m}$ be a vector space associated with a non-degenerate alternating form, $ m>1 $.
			Let $\mathcal{D}=(\mathcal{P},\mathcal{B},\calI)$ be an incidence geometry, where $\mathcal{P}=\{ v\, |\, v \in V \}$ and $$ \mathcal{B}=\{ U+v \ |\ v \in V \text{\ and\ }U \text{ is a non-degenerate $2$-subspace of\ }V\}.$$
			Similar to Example \ref{AG}, for any two points $w$ and $u$ in $\mathcal{P}$, the number of blocks containing both $w$ and $u$, denoted by $\lambda_{w,u}$, is equal to the number of non-degenerate $2$-subspaces containing $w_1=w-u$.	
			Every such subspace is in form $\langle w_1, w_2 \rangle$, where $w_2 \in V \setminus \langle w_1 \rangle^\perp$ has $q^{2m}-q^{2m-1}$ choices.
			Hence $$\lambda_{w,u}=\frac{q^{2m}-q^{2m-1}}{q^2-q}=q^{2m-2},$$ is a constant number, independent of the choice of $w$ and $u$.
			Then $\mathcal{D}=(\mathcal{P},\mathcal{B},\calI)$ is a $2$-$(q^{2m},q^2, q^{2m-2})$ design, which is a proper subdesign of $\AG_2(2m,q)$.
			
			Further, let $G=\widehat{V} \rtimes \Sp_{2m}(q)\leq \Aut(\calD)$ with point stabilizer $G_0=\Sp_{2m}(q)$. For $\b=U< V$ a non-degenerate $2$-subspace, i.e., a block containing point $0$, we have that $G_{0 \b}=(\Sp_{2m}(q))_U$ is a maximal subgroup of $G_{0}=\Sp_{2m}(q)$. Besides,
			$$G_\b^{\calD(\b)}\cong \widehat{U} \rtimes \Sp_{2}(q)$$ is primitive on $\calD(\b).$ Hence $G$ is locally primitive on design $\calD$. Note that $\widehat{V}\lhd G$ is not transitive on $\calB$. Then
			$G^\calP$ is an affine group and $G^\calB$ is non-quasiprimitive (i.e., $(G^\calP ,G^\calB)$ is of type $ (\HA,-) $).\vs
			
			In addition, though $ \AG_2(2m,2) $ is a $ 3 $-design, its subdesign $ \calD $ is not a $ 3 $-design: there exist distinct points $ u,v,w\in\mathcal{P}$ such that $ u\not\perp v $ but $ u\perp w $, and then there exists a unique block (a non-degenerate 2-subspace) containing $ \{0,u,v\} $, but no block will contain $ \{0,u,w\} $.
		}
	\end{example}

	\section{Primitivity on points and blocks}\label{Primitivity on points and blocks}
	
	Let $\calD=(\calP,\calB,\calI)$ be a 2-design, and let $G\le\Aut(\calD)$.
	
	\subsection{Point primitivity}\
	
	The following key lemma shows that locally primitive 2-designs are flag-transitive and point-primitive.
	
	\begin{lemma}\label{loc-pri1}
		Assume that $G$ is locally primitive on $\calD$.
		Then $\mathcal{D}$ is $G$-flag-transitive and $G$-point-primitive.
	\end{lemma}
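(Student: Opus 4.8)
The plan is to dispose of flag-transitivity immediately and then concentrate the effort on point-primitivity, which is the substantive claim.

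First I would observe that flag-transitivity is essentially free. A primitive group is transitive, so the hypothesis that $G_\a$ is primitive on $\calD(\a)$ for every point $\a$ and that $G_\b$ is primitive on $\calD(\b)$ for every block $\b$ gives in particular that $G$ is \emph{locally transitive}. Hence, by the equivalence between local transitivity and flag-transitivity recorded in the Introduction (the incidence graph of a $2$-design being connected, so that transitivity propagates along paths), $\calD$ is $G$-flag-transitive; in particular $G$ is transitive on $\calB$. In the argument below I only use that $G$ is block-transitive and that $G_\b$ is primitive on $\calD(\b)$, so the point-side hypothesis is needed only for this first step.

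For point-primitivity I would argue by contradiction. Suppose $G$ preserves a nontrivial partition $\calP=\Delta_1\cup\dots\cup\Delta_\ell$ with $\ell\ge2$ classes, each of size $c$ with $2\le c\le v-1$. The key device is to restrict this partition to a single block: for a block $\b$, partition $\calD(\b)$ according to which class $\Delta_i$ each of its points lies in. Because $G_\b$ permutes the $\Delta_i$ (the partition being $G$-invariant) and preserves incidence, this induced partition of $\calD(\b)$ is $G_\b$-invariant. As $G_\b$ is primitive on $\calD(\b)$ and $k=|\calD(\b)|\ge2$, the induced partition must be one of the two trivial ones, which yields the dichotomy: either (a) $\b$ meets every class $\Delta_i$ in at most one point, or (b) $\calD(\b)$ is contained in a single class $\Delta_i$.

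Finally I would propagate this dichotomy and extract a contradiction from $\lambda\ge1$. The type (a)/(b) of a block is clearly preserved by $G$, since $G$ permutes the classes, so by block-transitivity all blocks share the same type. Now, since $c\ge2$, I choose two distinct points in a common class; a block through them exists because $\lambda\ge1$, and it has two points in one class, so it is of type (b); hence \emph{every} block is of type (b). On the other hand, since $\ell\ge2$, I choose two points in distinct classes; a block through them has points in two different classes, which is impossible for a type (b) block when $k\ge2$. This contradiction shows that no nontrivial $G$-invariant partition exists, i.e. $G$ is point-primitive. I expect the only real subtlety to be the verification that the induced partition on $\calD(\b)$ is genuinely a system of blocks for the primitive action of $G_\b$, and that the two trivial outcomes translate into the clean incidence dichotomy (a)/(b); once this is set up, block-transitivity together with $\lambda\ge1$ finishes the proof by a short counting-free argument.
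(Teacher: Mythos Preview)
Your proposal is correct and follows essentially the same approach as the paper: both arguments observe that primitivity of $G_\b$ on $\calD(\b)$ forces each class of a putative nontrivial point-partition to meet $\calD(\b)$ in $0$, $1$, or $k$ points, and then combine block-transitivity with $\lambda\ge1$ to reach a contradiction. The only cosmetic difference is organizational: you first package the dichotomy into global ``type (a)/(b)'' and use block-transitivity to make the type uniform, whereas the paper fixes one class $P$, exhibits a block $\b_1$ with $\calD(\b_1)\subseteq P$ and a block $\b_2$ with $\calD(\b_2)\not\subseteq P$ sharing a point, and transports $\b_1$ to $\b_2$ by some $g\in G$ to force $\a_1\in P\cap P^g=\emptyset$.
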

	
	\begin{proof}
		Let $\Ga$ be the incidence graph of $\calD$.
		Then $\Ga$ is a connected graph.
		Since $G$ is locally primitive on $\calD$, for each element $\b\in\calB$, the stabilizer $G_\b$ is primitive (and so is transitive) on $\calD(\b)$.
		It implies that all flags of $\calD$ are equivalent under $G$ action, and hence $G$ is flag-transitive on $\calD$.
		
		Suppose that $G$ is imprimitive on $\calP$.
		Let $\calP=P_1\cup\dots\cup P_m$ be a non-trivial $G$-invariant partition of $\calP$, namely, each permutation of $G$ on $\calP$ induces a permutation of $G$ on the set $\{P_1,\dots,P_m\}$.
		Pick a member $P\in\{P_1,\dots,P_m\}$.
		For each element $\b\in\calB$, since $G_\b$ is primitive on $\calD(\b)$, we have that
		\[|P\cap \calD(\b)|=0, 1, \ \mbox{or}\ |\calD(\b)|.\]
		Take points $\a_1,\a_2\in P$, and $\a_3\notin P$.
		Let $\b_1,\b_2\in\mathcal{B}$ be such that $\a_1,\a_2\in \calD(\b_1)$, $\a_1,\a_3\in \calD(\b_2)$.
		Then $|P\cap\calD(\b_1)|=|\calD(\b_1)|$, and so $P\cap\calD(\b_1)=\calD(\b_1)$, while $\calD(\b_2)\not\subseteq P$.
		Since $G$ is transitive on $\mathcal{B}$, there exists $g\in G$ which maps $\b_1$ to $\b_2$.
		Then we have that
		\[\calD(\b_2)=\calD(\b_1)^g=(\calD(\b_1)\cap P)^g=\calD(\b_2)\cap P^g.\]
		Thus $\calD(\b_2)\subseteq P^g$, so $P\not=P^g$ and then $P\cap P^g=\emptyset$.
		However, $\a_1\in \calD(\b_1)\cap\calD(\b_2)\subseteq P\cap P^g$, which is a contradiction.
		So $\calP$ does not have a non-trivial $G$-invariant partition, and $G$ is primitive on $\calP$.
	\end{proof}

	\subsection{Block imprimitivity}\
	
	Although $G^\calP$ is primitive, the permutation group $G^\calB$ is not necessarily primitive.
	We next establish Lemma~\ref{imprimitiveonB}, which will be needed in the ensuing arguments.
	
	Recall that, for an element $\b\in\calB$, the set of points incident with $\b$ is denoted by  $\calD(\b)=\{\a\in\calP\mid \a\sim\b\}$ for the notation convenience.
	Now for a subset $\Delta$ of $\calB$, we let
	\[\calD(\Delta)=\bigcup_{\b\in\Delta}\calD(\b)=\{\a\in\calP\mid \a\sim\b\in\Delta\},\]
	consisting of points which are incident with some elements in $\Delta$.
	
	Let $\Delta$ with $1<|\Delta|<|\calB|$ be a block of imprimitivity, namely, for any element $g\in G\le\Aut(\calD)$, either $\Delta^g=\Delta$, or $\Delta^g\cap\Delta=\emptyset$.
	
	
	\begin{lemma} \label{imprimitiveonB}
		Assume that $G$ is imprimitive on $\mathcal{B}$, and let $\Delta$ be a block of imprimitivity for $G$ acting on $\calB$.
		Then the following statements hold.
		\begin{itemize}
			\item[\rm (1)] $G_{\alpha \b }=G_{\alpha \Delta}$, for any $\b\in\Delta$.\vs
			\item[\rm(2)] Any distinct blocks $\b_1, \b_2\in \Delta$ are disjoint, namely, $\calD(\b_1)\cap\calD(\b_2)=\emptyset$.
		\end{itemize}
	\end{lemma}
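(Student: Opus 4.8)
The plan is to prove part~(2) first and then read off part~(1) from it. The linchpin observation is that, for \emph{every} point $\a$, the set $\Delta\cap\calD(\a)$ is either empty or a block of imprimitivity for the action of $G_\a$ on $\calD(\a)$. Indeed, for $g\in G_\a$ one has $(\Delta\cap\calD(\a))^g=\Delta^g\cap\calD(\a)$ (since $g$ fixes $\a$, hence fixes $\calD(\a)$ setwise), and because $\Delta$ is a block of imprimitivity for $G$ on $\calB$ we have $\Delta^g=\Delta$ or $\Delta^g\cap\Delta=\emptyset$; in the two cases $\Delta^g\cap\calD(\a)$ is $\Delta\cap\calD(\a)$ or is disjoint from it. As $G$ is locally primitive, $G_\a$ is primitive on $\calD(\a)$, so I conclude the trichotomy that $\Delta\cap\calD(\a)$ is empty, a singleton, or the whole of $\calD(\a)$.

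The crux of the proof, and the step I expect to be the main obstacle, is to rule out the last alternative, namely that $\calD(\a)\subseteq\Delta$ for some point $\a$. I would argue by contradiction. Suppose such an $\a$ exists, and let $\{\Delta_1=\Delta,\dots,\Delta_n\}$, with $n\ge2$, be the $G$-invariant block system containing $\Delta$. By point-transitivity (Lemma~\ref{loc-pri1}), any point $\g=\a^x$ satisfies $\calD(\g)=\calD(\a)^x\subseteq\Delta^x=\Delta_j$ for some $j$, so every point lies in exactly one class $P_j:=\{\g\mid\calD(\g)\subseteq\Delta_j\}$ (it cannot lie in two, as $\calD(\g)\ne\emptyset$ while distinct $\Delta_j$ are disjoint), and these classes partition $\calP$. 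Since $G$ permutes the block system transitively and $\g\mapsto\calD(\g)$ is $G$-equivariant, $G$ permutes the $P_j$ transitively; as $P_1\ni\a$ is nonempty, all $n\ge2$ classes are nonempty. Moreover there is no flag $(\g,\b)$ with $\g\in P_j$ and $\b\in\Delta_{j'}$ for $j\ne j'$, for then $\b\in\calD(\g)\subseteq\Delta_j$ would give $\b\in\Delta_j\cap\Delta_{j'}=\emptyset$. Hence the incidence graph $\Ga$ is the disjoint union of the subgraphs on $P_j\cup\Delta_j$ over these $n\ge2$ nonempty parts, contradicting the connectedness of $\Ga$ (Proposition~\ref{lem:diam}). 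Thus $\Delta\cap\calD(\a)$ is always empty or a singleton. Part~(2) then follows immediately: if distinct $\b_1,\b_2\in\Delta$ shared a point $\g$, then $\{\b_1,\b_2\}\subseteq\Delta\cap\calD(\g)$ would be a set of size at least $2$.

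Finally I would deduce part~(1). Fix a flag $(\a,\b)$ with $\b\in\Delta$, so that $\Delta\cap\calD(\a)=\{\b\}$ by the singleton conclusion above. The inclusion $G_{\a\b}\le G_{\a\Delta}$ is routine: if $g$ fixes $\b\in\Delta$, then $\b\in\Delta\cap\Delta^g$, so $\Delta^g=\Delta$ and $g\in G_{\a\Delta}$. For the reverse, take $g\in G_{\a\Delta}$; then $\b^g\in\Delta^g=\Delta$, and since automorphisms preserve incidence, $\a=\a^g\sim\b^g$ gives $\b^g\in\calD(\a)$, whence $\b^g\in\Delta\cap\calD(\a)=\{\b\}$, forcing $\b^g=\b$ and $g\in G_{\a\b}$. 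This yields $G_{\a\b}=G_{\a\Delta}$. The entire argument thus rests on the trichotomy of the first paragraph together with the connectedness obstruction of the second, and it uses nothing beyond local primitivity, point-transitivity, and the fact that $\Delta$ is a proper block of imprimitivity.
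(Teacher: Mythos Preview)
Your proof is correct and follows essentially the same strategy as the paper's, with only a cosmetic reorganization: the paper proves (1) first using maximality of $G_{\a\b}$ in $G_\a$ and then deduces (2) from (1), whereas you first establish the trichotomy for $\Delta\cap\calD(\a)$ (which is exactly the maximality argument rephrased) and then read off (2) and (1) in that order. Your connectedness contradiction to exclude $\calD(\a)\subseteq\Delta$ is the same idea as the paper's, which simply picks $g$ with $\Delta^g\cap\Delta=\emptyset$ and notes that any block through $\a$ and $\a^g$ would lie in $\Delta\cap\Delta^g=\emptyset$; the paper uses the $2$-design property directly rather than invoking Proposition~\ref{lem:diam}, but the content is identical.
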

	
	\begin{proof}
		(1). Let $\b\in\Delta$.
		Then, as $\mathcal{D}$ is $G$-locally primitive, $G_{\alpha \b}$ is a maximal subgroup of $G_\alpha$.
		Since $\Delta$ is a block of imprimitivity for $G$ acting on $\calB$, we have that $G_\b \leqslant G_\Delta$.
		Thus $G_{\alpha \b} \leqslant G_{\alpha \Delta} \leqslant G_\alpha.$
		
		Suppose that $G_{\alpha \Delta} = G_\alpha$.
		Then $\calD(\alpha)=\b^{G_\alpha} =\b^{G_{\alpha \Delta}} \subset \Delta$.
		Let $g\in G$ be such that $\Delta^g\cap\Delta=\emptyset$.
		Then $\calD(\a^g)=\calD(\a)^g\subset\Delta^g$.
		Let $\b'\in\calB$ be such that $\a\sim\b'\sim\a^g$.
		Then $\b'\in\calD(\a)\cap\calD(\a^g)\subset\Delta\cap\Delta^g=\emptyset$, which is a contradiction.
		Thus $G_{\alpha \Delta} < G_\a$, and so $G_{\a\b}=G_{\a\Delta}$ since $G_{\a\b}$ is a maximal subgroup of $G_\a$.\vs

		(2) Suppose that $\calD(\b_1)\cap\calD(\b_2)\not=\emptyset$, where $\b_1,\b_2\in \Delta$ are distinct.
		Let $\a\in \calD(\b_1)\cap\calD(\b_2)$.
		Since $G_\a$ is transitive on $\calD(\a)$, there exists $g \in G_\a$ such that $\b_1^g=\b_2$.
		Thus $\Delta^g=\Delta$, and $g\in G_{\a\Delta}$.
		By part~(1), we have that $g\in G_{\a\Delta}=G_{\a\b_1}$, and so $\b_2=\b_1^g=\b_1$, which is a contradiction.
		So $\calD(\b_1)\cap\calD(\b_2)=\emptyset$, and part~(2) is proved.
	\end{proof}

	\section{A reduction} \label{A reduction}
	
	In this section, let $\mathcal{D}=(\mathcal{P},\mathcal{B},\calI)$ be a $G$-locally primitive 2-design.
	We are going to determine the primitive permutation group $G^\calP$.

	The first lemma characterizes the case where $G$ has a normal subgroup which is regular on $\calP$.
	
	\begin{lemma}\label{twhshc}
		Assume that $G$ has a normal subgroup $N$ which is regular on $\calP$.
		Then $N$ is elementary abelian, and $G$ is a primitive affine group on $\calP$.
	\end{lemma}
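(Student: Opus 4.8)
The plan is to combine point-primitivity with the O'Nan--Scott--Praeger classification and then eliminate every non-affine possibility. First I would observe that, by Lemma~\ref{loc-pri1}, $G$ is primitive on $\calP$; since $N\lhd G$ is regular, any nontrivial $G$-normal subgroup contained in $N$ is transitive, hence (being semiregular inside the regular $N$) equal to $N$, so $N$ is a minimal normal subgroup. Thus $N$ is characteristically simple, $N=T^m$ with $T$ simple and $m\ge1$. By Theorem~\ref{thm:O'Nan-Scott}, a primitive group with a regular minimal normal subgroup has type $\HA$, $\HS$, $\HC$ or $\TW$. If $T$ is abelian then $N=\Z_p^{\,m}$ is elementary abelian and $G\le\AGL(m,p)$ is affine, which is exactly the conclusion; so the whole task reduces to ruling out the nonabelian types $\HS$, $\HC$, $\TW$.

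For the nonabelian case I would identify $\calP$ with $N$ via the regular action, taking $\a$ to be the identity, so that $G_\a\le\Aut(N)$ acts by automorphisms and each block incident with $\a$ is an image $\s(\calD(\b))$ for some $\s\in G_\a$. For a block $\b\sim\a$, the normal subgroup $N_\b=N\cap G_\b$ of $G_\b$ acts on $\calD(\b)$; as $N$ is semiregular and $G_\b$ is primitive (hence quasiprimitive) on $\calD(\b)$, either $N_\b=1$ or $N_\b$ is regular on $\calD(\b)$, in which case $\calD(\b)=N_\b$ is a subgroup of $N$ of order $k$. The decisive split is whether $N$ is transitive on $\calB$. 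If $N$ is intransitive on $\calB$, its orbits form a nontrivial $G$-invariant partition, so $G^\calB$ is imprimitive and Lemma~\ref{imprimitiveonB} applies to $\Delta=\b^{N}$. Were $N_\b=1$, then $\Delta$ would consist of $v$ distinct blocks $\calD(\b)n$ which, by Lemma~\ref{imprimitiveonB}(2), are pairwise disjoint $k$-subsets of the $v$-element set $\calP$, forcing $vk\le v$ and hence $k\le1$, impossible for a nontrivial design. So $N_\b\ne1$ and $\calD(\b)=H$ is a subgroup of order $k$; the blocks are then the cosets of the $G_\a$-orbit $\{H_1,\dots,H_r\}$ of $H$, and the $2$-design condition translates cleanly into: \emph{every nonidentity element of $N$ lies in exactly $\lambda$ of the subgroups $H_i$}.

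If instead $N$ is transitive on $\calB$, then $b\le|N|=v$, so $b=v$, the design is symmetric and $N$ is regular on $\calB$; here $G$ is quasiprimitive on both biparts (and faithful on $\calB$ by Lemma~\ref{faithful}), so the reduction of \cite{s-arc} for $G$-locally primitive bipartite graphs applies to the incidence graph and forces the type of $G^\calP$ into $\{\HA,\AS,\TW,\PA\}$, excluding $\HS$ and $\HC$. After these branches, the genuine difficulty remains the twisted wreath type $\TW$ (with $m\ge2$, $C_G(N)=1$, and $G_\a$ transitive on the factors $T_1,\dots,T_m$), which \cite{s-arc} does not exclude and which must be killed here to reach the affine conclusion. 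I expect this to be the main obstacle: the factors of $N$ are fused by $G_\a$, so there is no $G$-invariant direct factorisation and Lemma~\ref{product} does not apply off the shelf; the route I would take is to exploit the subdirect structure forced on $H=\calD(\b)$ by the transitive factor action together with the equitable-cover condition of the previous paragraph, backed by the stabiliser bound $|G|<|G_\a|^3/|G_{\a\b}|^2$ of Lemma~\ref{large-stab}. The parallel delicate point is excluding nonabelian $N$ in the coset-block configuration for $\HS$ (where $N=T$ is simple and no internal factorisation is available), which I would treat through the primitivity of $G_\a$ on the $T$-conjugacy class of $H$ inside the simple group $T$.
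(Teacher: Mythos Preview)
Your proposal is incomplete, and you already recognise it: the $\TW$ case is left as a ``main obstacle'' with only a vague strategy (subdirect structure of $H=\calD(\b)$, the bound of Lemma~\ref{large-stab}), and the $\HS$ coset-block situation is similarly only sketched. The intransitive-on-$\calB$ branch also stops at the reformulation ``every nonidentity element of $N$ lies in exactly $\lambda$ of the $H_i$'' without drawing any contradiction from it in the nonabelian case. Moreover, in the transitive-on-$\calB$ branch you assert that $G$ is quasiprimitive on $\calB$ in order to invoke \cite{s-arc}, but transitivity of the single regular normal subgroup $N$ on $\calB$ does not by itself give quasiprimitivity of $G$ on $\calB$; in type $\HS$ or $\HC$ there is a second minimal normal subgroup whose behaviour on $\calB$ you have not controlled. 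So as written the argument does not close.

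The paper's proof is entirely different and avoids all of this. It does not try to kill $\HS$, $\HC$, $\TW$ separately. Instead it quotes a known external result, \cite[Proposition~2.3]{tw}, to the effect that a normal subgroup acting regularly on the points of a (flag-transitive) $2$-design must be solvable. Once $N$ is solvable, primitivity of $G$ on $\calP$ (Lemma~\ref{loc-pri1}) forces $N$ to be a minimal normal subgroup, hence characteristically simple and solvable, i.e.\ elementary abelian; thus $G^{\calP}\le\AGL(d,p)$ is affine. The whole lemma is then two lines. Your route could in principle be pushed through, but you would essentially be reproving the cited proposition; the missing idea is precisely that solvability of a point-regular normal subgroup is already known and should be invoked rather than re-derived case by case.
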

	\begin{proof}
		As $N$ is regular on $\calP$,
		by \cite[Proposition 2.3]{tw}, $N$ is solvable.
		Since $G$ is primitive on $\calP$ by Lemma~\ref{loc-pri1}, we conclude that $N$ is elementary abelian.
		Thus $G^\calP$ is a primitive affine group.
	\end{proof}

	The next lemma treats the case where $ G^\calB $ is not quasiprimitive.
	
	\begin{lemma}\label{lem:reg-subgp}
		Assume that $ G^\calB $ is not quasiprimitive.
		Then $G^\calP$ is an affine group.
	\end{lemma}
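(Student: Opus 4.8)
The plan is to manufacture a normal subgroup of $G$ that is regular on $\calP$ and then invoke Lemma~\ref{twhshc}. First I would use the failure of quasiprimitivity to fix a nontrivial normal subgroup $N\trianglelefteq G$ that is intransitive on $\calB$. By Lemma~\ref{loc-pri1} the group $G$ is point-primitive, hence quasiprimitive on $\calP$, so every nontrivial normal subgroup is transitive on $\calP$; in particular $N$ is transitive on $\calP$ but intransitive on $\calB$. Let $\Delta$ be an $N$-orbit on $\calB$. As $N\trianglelefteq G$ and $G$ is transitive on $\calB$, all $N$-orbits have the same size and $\Delta$ is a block of imprimitivity for $G^\calB$; moreover $|\Delta|>1$, since otherwise $N$ would fix every block and act trivially on $\calB$, contrary to the faithfulness given by Lemma~\ref{faithful}. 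Lemma~\ref{imprimitiveonB}(2) now says that the blocks in $\Delta$ are pairwise disjoint, and since $N$ is transitive on $\calP$ and stabilises $\Delta$ we obtain $\calP=\bigcup_{\b\in\Delta}\calD(\b)$, a partition of $\calP$ into $|\Delta|$ parts of size $k$; thus $v=|\Delta|k$. A transitivity count then gives that $N_\b:=N\cap G_\b$ is transitive on $\calD(\b)$ with $[N_\b:N_\a]=k$, and, because $N$ preserves each such parallel class, the stabiliser $N_\a$ fixes every block incident with $\a$.

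Next I would pass to a minimal normal subgroup of $G$ inside $N$, so that $N=T^m$ for a simple group $T$. If $T$ is abelian, then $N\cong\mathbb{Z}_p^m$ is abelian and transitive, hence regular on $\calP$, and Lemma~\ref{twhshc} at once yields that $G^\calP$ is affine. The entire problem is therefore to rule out nonabelian $T$. Here Lemma~\ref{twhshc} already shows that such an $N$ cannot be regular on $\calP$ (a regular normal subgroup is forced to be elementary abelian), so $N_\a\ne1$; equivalently the types $\TW,\HS,\HC$ for $G^\calP$ are excluded and we are left with $\AS,\SD,\CD$ or $\PA$.

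To eliminate these I would argue inside a single block. Local primitivity makes $G_\b$ primitive on $\calD(\b)$, while $N_\b\trianglelefteq G_\b$ is transitive on $\calD(\b)$; hence the image $N_\b^{\calD(\b)}$ contains a transitive minimal normal subgroup of $G_\b^{\calD(\b)}$, and the socle of this primitive group is realised as a section of $N=T^m$. Writing $N=T_1\times\dots\times T_m$ for the simple direct factors permuted by $G$, and using the product form of the point stabiliser supplied by the $\SD/\CD/\PA$ descriptions in Theorem~\ref{thm:O'Nan-Scott}, I would produce a nontrivial direct-product decomposition of the transitive action of $N_\b$ on $\calD(\b)$ and play it off against Lemma~\ref{product}: that lemma forbids any subgroup of $G_\b$ of the shape $K_1\times K_2$ (with $K_i$ in the two halves of a compatible splitting) from being transitive on $\calD(\b)$. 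The contradiction forces $T$ abelian, and with it $G^\calP$ affine.

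The hard part will be exactly this last step, namely arranging genuine compatible splittings $G=G_1\times G_2$ and $G_\a=L_1\times L_2$ to which Lemma~\ref{product} literally applies. In the product-action (and compound-diagonal) situations the top group $G$ is wreath-like rather than a direct product, so I expect to first factor through the kernel $K=\bigcap_{\b\in\Delta}N_\b\trianglelefteq N$, which is a subproduct $\prod_{i\in S}T_i$ acting within the parts $\calD(\b)$, and then to use the induced transitive action of $N$ on the parallel class $\Delta$ to split the remaining factors. Making this splitting honest, and checking that the resulting transitive subgroup of $G_\b$ really has the product shape required by Lemma~\ref{product}, is where the real work lies.
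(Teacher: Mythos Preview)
Your route to showing that $N_\a$ fixes $\calD(\a)$ pointwise is correct, though different from the paper's: you use the parallel-class structure coming from Lemma~\ref{imprimitiveonB}(2), while the paper argues directly that if $N_\a^{\calD(\a)}\ne1$ then (by primitivity of $G_\a^{\calD(\a)}$) this normal subgroup would be transitive on $\calD(\a)$, making $N$ flag-transitive and hence block-transitive, contrary to the choice of $N$. Both arguments arrive at the same intermediate conclusion $N_\a^{\calD(\a)}=1$.

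From that point, however, you are making the problem far harder than necessary. The paper dispatches the rest in one line by invoking a result of Zieschang \cite[Proposition~2]{Zies1988}: if $N\trianglelefteq G$ with $G$ flag-transitive on a $2$-design and $N_\a$ acts trivially on $\calD(\a)$, then $N$ is semiregular on $\calP$. Since your $N$ is already transitive on $\calP$, it is regular, and Lemma~\ref{twhshc} finishes immediately. No case analysis on the O'Nan--Scott type of $G^\calP$ is needed, and Lemma~\ref{product} does not enter this lemma at all.

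Your proposed alternative via Lemma~\ref{product} has exactly the obstacle you identify, and it is a real one: that lemma requires a direct-product decomposition of the flag-transitive group itself, not merely of $\soc(G)$, and in the $\PA$, $\SD$, $\CD$ situations $G$ is wreath-like with a top group permuting the simple factors. The paper does use Lemma~\ref{product} later (Section~\ref{sec:PA}), but only after Lemma~\ref{lem:PA-on-P} has established that $N=\soc(G)$ is itself flag-transitive, so one may apply Lemma~\ref{product} with $N$ in place of $G$; and even then the $\SD$ case (Lemma~\ref{PASD}) requires a separate ad~hoc argument. In the present situation $G^\calB$ is not quasiprimitive, so you have no analogue of Lemma~\ref{lem:PA-on-P} to fall back on, and there is no reason $N$ should be flag-transitive. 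Attempting to replicate the Section~\ref{sec:PA} machinery here would be substantial overkill when Zieschang's result closes the argument in a sentence.
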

	\begin{proof}
		Let $1 \ne N \triangleleft G$, such that $N$ is intransitive on $\calB$.
		Thus $N$ is intransitive on the flag set of $\calD$.
		
		Suppose that $N_\a^{\calD(\a)}\not=1$.
		Then the non-trivial normal subgroup $N_\a^{\calD(\a)}$ of the primitive permutation group $G_\a^{\calD(\a)}$ is transitive.
		Since $N$ is transitive on $\calP$, it follows that $N$ is transitive on the flag set, which is a contradiction.
		
		We thus conclude that $N_\a^{\calD(\a)}=1$, namely, $N_\a$ acts trivially on $\calD(\a)$.
		By the result of Zieschang in~\cite[Proposition 2]{Zies1988}, the normal subgroup $N$ is regular on $\calP$.
		By Lemma \ref{twhshc}, we obtain that $G^\calP$ is of type HA, namely, $G^\calP$ is an affine primitive group.
	\end{proof}

	The final lemma of this section deals with the case where $ G^\calB $ is quasiprimitive.

	\begin{lemma} \label{sdcd}
		If $ G^\calB $ is quasiprimitive, then $G^{\mathcal{P}}$ is of type $\HA$, $\AS$, or $\PA$.
	\end{lemma}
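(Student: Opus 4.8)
The plan is to feed the incidence graph into the reduction for locally primitive bipartite graphs from \cite{s-arc} and then to eliminate the surviving diagonal and twisted‑wreath types using the arithmetic that a $2$-design forces on its parameters. First I would record that both actions are faithful and quasiprimitive: $G$ is faithful on $\calP$ by definition and on $\calB$ by Lemma~\ref{faithful}, $G^\calP$ is primitive (hence quasiprimitive) by Lemma~\ref{loc-pri1}, and $G^\calB$ is quasiprimitive by hypothesis. The incidence graph $\Ga=\Cos(G,G_\a,G_\b)$ is therefore a $G$-locally primitive bipartite graph whose two parts $\calP,\calB$ are the $G$-orbits, so the reduction of \cite{s-arc} applies and shows that the type pair $\{G^\calP,G^\calB\}$ is one of $\{X,X\}$ with $X\in\{\HA,\AS,\TW,\PA\}$, or $\{\SD,\PA\}$, or $\{\CD,\PA\}$. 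In particular $G^\calP\in\{\HA,\AS,\TW,\PA,\SD,\CD\}$, and it remains only to exclude $\TW$, $\SD$ and $\CD$.

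Type $\TW$ is immediate, since there the socle $N=\soc(G)=T^m$ is regular on $\calP$ with $T$ nonabelian simple, contradicting Lemma~\ref{twhshc}. For $\SD$ and $\CD$ I would work through $N=T^m$. Exactly as in the proof of Lemma~\ref{lem:reg-subgp}, the normal subgroup $N_\a^{\calD(\a)}\trianglelefteq G_\a^{\calD(\a)}$ is trivial or transitive; triviality would force $N$ to be regular on $\calP$ by \cite[Proposition 2]{Zies1988}, which is impossible because $N_\a\neq1$ in both types. Hence $N$ is flag-transitive and all parameters become indices in $N$: $v=|N:N_\a|$, $r=|N_\a:N_{\a\b}|$, $k=|N_\b:N_{\a\b}|$. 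Now $N_\a\cong T^\ell$ is a product of $\ell$ diagonal subgroups (with $\ell=1$ in the $\SD$ case), while in the paired $\PA$ action on $\calB$ the stabilizer is a full direct product $N_\b\cong R^m$ with $1\neq R<T$; local primitivity forces the flag stabilizer to be the aligned diagonal intersection $N_{\a\b}\cong R^\ell$. Writing $a=|T|$, $s=|R|$ and $n=|T:R|=a/s\ge2$, this gives $v=a^{m-\ell}$, $r=n^\ell$ and $k=s^{m-\ell}$.

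Substituting into the identity $\lambda(v-1)=r(k-1)$ from Section~\ref{SectionPre}, and using that both types satisfy $m\ge2\ell$ (each of the $\ell$ diagonal factors of $N_\a$ involves $m/\ell\ge2$ simple direct factors of $N$), so that $n^{m-\ell}\ge n^\ell$, I obtain
\[
\lambda=\frac{n^\ell(s^{m-\ell}-1)}{n^{m-\ell}s^{m-\ell}-1}<1,
\]
because the numerator $n^\ell s^{m-\ell}-n^\ell$ is strictly less than the denominator $n^{m-\ell}s^{m-\ell}-1$. This contradicts $\lambda\ge1$, so neither $\SD$ nor $\CD$ occurs and $G^\calP\in\{\HA,\AS,\PA\}$. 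I expect the main obstacle to be the precise determination of the flag stabilizer $N_{\a\b}=N_\a\cap N_\b$: one must verify that local primitivity (the maximality of $G_{\a\b}$ in $G_\a$) genuinely forces the aligned intersection of order $s^\ell$, rather than a smaller intersection of conjugates of $R$, and that this persists for an arbitrary top group $G\ge N$ and not merely for $G=N$. Once $N_{\a\b}$ is pinned down, the remaining arithmetic is routine.
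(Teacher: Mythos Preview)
Your use of the \cite{s-arc} reduction is legitimate (the paper itself invokes it later, in Lemma~\ref{lem:PA-on-P}), and your disposal of $\TW$ via Lemma~\ref{twhshc} and of the case $N_\a^{\calD(\a)}=1$ via Zieschang are both fine. The real issue is the transitive case for $\SD/\CD$, and there the gap is worse than the one you flag.

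You assume $N_\b\cong R^m$ for some $1\neq R<T$. That is the shape of the socle stabiliser in a \emph{primitive} $\PA$ action, but the \cite{s-arc} reduction only tells you $G^\calB$ is \emph{quasiprimitive} of type $\PA$; in that generality $N_\b$ is merely a subdirect subgroup of $T^m$ with proper coordinate projections and need not split as a direct product at all. The paper makes this distinction explicit in its O'Nan--Scott summary, and again in the proof of Lemma~\ref{PAPA}, where the primitive subcase is dispatched in one line and the genuine work is the imprimitive quasiprimitive subcase. Without $N_\b=R^m$ you lose the formulae $k=s^{m-\ell}$ and $N_{\a\b}\cong R^\ell$, so the inequality $\lambda<1$ has no footing. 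The obstacle is therefore not only pinning down $N_{\a\b}$; your description of $N_\b$ itself can fail.

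The paper's own proof bypasses all of this with a short argument that never touches $N_\b$ and does not use \cite{s-arc} here. In the transitive case one has $r\mid|N_\a|=|T|^\ell$, while $v-1=|T|^{m-\ell}-1$ is coprime to $|T|$; hence $\gcd(r,v-1)=1$, and $\lambda(v-1)=r(k-1)$ forces $r\mid\lambda$, contradicting $r>\lambda$. This uses only the $\calP$-side structure of $N_\a$. Replacing your parameter computation by this coprimality observation closes the gap immediately.
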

	
	\begin{proof}
		Assume that $ G^\calB $ is quasiprimitive.
		Let $N=\soc(G)=T^m,$ where $T$ is nonabelian simple.
		Suppose $G^{\mathcal{P}}$ of type $ \SD $ or $ \CD $.
		Let $ (\alpha,\b)\in(\calP,\calB) $ be a flag.
		Then $N$ is transitive on $\mathcal{P}$ with point stabilizer $N_{\alpha}\cong T^\ell$, where $1 \leqslant \ell<m$.
		Since $G_\a$ is primitive on $\calD(\a)$ and $N_{\alpha}\lhd G_\alpha$, either $N_\a$ is transitive on $\calD(\a)$, or $N_\a$ fixes $\calD(\a)$ pointwise.
		
		Suppose that $ N_{\alpha} $ is transitive on $\calD(\alpha) $.
		Then $r=|\calD(\a)|$ divides $|N_{\alpha}|=|T|^\ell$, and thus $r$ is coprime to $|T|^{m-l}-1=v-1$.
		Since $\lambda(v-1)=r(k-1)$, we have that $r$ divides $\lambda$, which is a contradiction since $r>\lambda$.
		
		We thus conclude that $ N_{\alpha} $ acts trivially on $\calD(\alpha) $.
		Thus $ N_{\a}\leqslant N_\b$ for each element $\b\in\calD(\a)$.
		Since $G$ is primitive on $\calP$ and quasiprimitive on $\calB$, the normal subgroup $N$ is transitive on both $ \mathcal{P} $ and $ \mathcal{B} $.
		Thus $v=|N:N_\a|\ge|N:N_\b|=b$, so that $ v=b $ as it is known that $v\le b$.
		It then follows that $|N_\a|=|N_\b|$ and so $ N_{\a}= N_\b$ as $N_\a\le N_\b$.
		Since the incidence graph $\Ga$ of $\calD$ is connected, it follows that $N_\a$ fixes each element of $\calP\cup\calB$.
		Since $N$ is faithful on $\calP\cup\calB$ by Lemma~\ref{faithful}, we have that $T\le T^\ell \cong N_\a=1$, which is a contradiction.
		Thus $G^\calP$ is not of type SD or type CD.
		
		By Lemma~\ref{twhshc}, $G^\calP$ is not of type HS, HC, or TW, and thus $G^{\mathcal{P}}$ is of type $\HA$, $\AS$, or $\PA$ by Theorem~\ref{thm:O'Nan-Scott}.
	\end{proof}

	Note that for the case where $G^\calP$ is of type $\AS$, Theorem~\ref{MainThm} is satisfied as $G^\calB$ is quasiprimitive by Lemma \ref{lem:reg-subgp}.
	To sum up, in order to prove Theorem \ref{MainThm}, we need to tackle with the cases: (1) $G^{\calP}$ is of type HA and $G^{\calB}$ is non-quasiprimitive; and (2) $G^{\calP}$ is of type PA.
	These are dealt with in the following Sections \ref{X-} and \ref{sec:PA}, respectively.

	\section{Affine groups on points}\label{X-}
	
	In this section, we assume that $\mathcal{D}=(\mathcal{P},\mathcal{B},\calI)$ is a $G$-locally primitive $ 2 $-design, and further, $G^\calP$ is primitive affine, and $G^\calB$ is non-quasiprimitive.
	
	\begin{lemma} \label{orbit}
		Each $N$-orbit on $\mathcal{B}$ is of size equal to $\frac{v}{k}=\frac{b}{r}$.
	\end{lemma}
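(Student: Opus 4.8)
We are in the affine case: $G^{\calP}$ is primitive of type $\HA$, so $N=\soc(G)=\widehat V\cong\mathbb Z_p^m$ is regular on $\calP$, and $G^{\calB}$ is non-quasiprimitive, witnessed precisely by $N$ being intransitive on $\calB$. The claim is that every $N$-orbit on $\calB$ has size $v/k=b/r$. The plan is to identify an $N$-orbit $\Delta$ on $\calB$ with the block of imprimitivity studied in Lemma~\ref{imprimitiveonB}, and then count incident point--block pairs inside $\Delta$ two ways.

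**Main steps.** First I would verify that an $N$-orbit $\Delta=\b^{N}$ on $\calB$ really is a block of imprimitivity for $G$ on $\calB$: since $N\lhd G$, the $N$-orbits on $\calB$ form a $G$-invariant partition, and because $N$ is intransitive this partition is nontrivial, so each $N$-orbit is a genuine block of imprimitivity with $1<|\Delta|<|\calB|$. This places us exactly in the hypotheses of Lemma~\ref{imprimitiveonB}. Hence part~(2) of that lemma applies: any two distinct blocks in $\Delta$ are point-disjoint. Next I would count the flags $(\a,\b')$ with $\b'\in\Delta$. On one hand, each block in $\Delta$ is incident with exactly $k$ points, and the blocks in $\Delta$ are pairwise disjoint by Lemma~\ref{imprimitiveonB}(2), so $\calD(\Delta)$ has size $k\,|\Delta|$ and these points are covered once each. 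On the other hand, since $N$ is regular on $\calP$ and $\Delta$ is an $N$-orbit, $\calD(\Delta)=\bigcup_{\b'\in\Delta}\calD(\b')$ is $N$-invariant, hence a union of $N$-orbits on $\calP$; but $N$ is transitive on $\calP$, so $\calD(\Delta)$ is either all of $\calP$ or (after checking it is nonempty and proper) must be accounted for by the regular action. The cleanest route is to use regularity directly: fix $\a\in\calD(\Delta)$ and note that the stabilizer in $N$ of the partition member $\Delta$ is all of $N$, so $N$ acts on the incidences between $\calP$ and $\Delta$.

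**Finishing the count.** The disjointness from Lemma~\ref{imprimitiveonB}(2) gives $|\calD(\Delta)|=k\,|\Delta|$. To pin down $|\Delta|$, I would compute $|\calD(\Delta)|$ independently. Since $N$ is regular on $\calP$ and transitive on $\Delta$ (by definition of an $N$-orbit), for a fixed point $\a_0\in\calP$ the number of blocks of $\Delta$ through $\a_0$ is a constant, say $c$, independent of $\a_0$ by transitivity of $N$; counting flags in $\Delta$ both ways yields $c\,v=k\,|\Delta|$. By Lemma~\ref{imprimitiveonB}(2) distinct blocks of $\Delta$ share no point, so $c\le 1$, and $c=1$ exactly when $\calD(\Delta)=\calP$. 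Showing $c=1$ is the crux: if $c=0$ for some point then, by $N$-transitivity on $\calP$, $c=0$ everywhere, forcing $\Delta$ empty, a contradiction; hence $c=1$, giving $|\Delta|=v/k$. Then the identity $vr=bk$ from the parameter relations rewrites $v/k=b/r$, completing the proof.

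**Expected obstacle.** The delicate point is establishing $c=1$, i.e.\ that $\calD(\Delta)=\calP$ rather than a proper $N$-invariant subset. This is where regularity of $N$ and the disjointness of blocks in $\Delta$ must be combined carefully: regularity forces the number of blocks of $\Delta$ through any point to be constant, and disjointness forces that constant to be at most one, so the only alternatives are $c=0$ (impossible) or $c=1$. I expect the author to phrase this via the fact that $\calD(\Delta)$ is $N$-invariant and $N$ is transitive on $\calP$, so $\calD(\Delta)$ is either empty or all of $\calP$; since $\Delta\neq\emptyset$ it must be all of $\calP$, immediately giving $k\,|\Delta|=v$ and hence the stated orbit size.
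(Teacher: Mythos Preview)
Your proof is correct. Both your argument and the paper's hinge on Lemma~\ref{imprimitiveonB}(2) (disjointness of blocks within an $N$-orbit), but the counting is organized dually. You fix a single $N$-orbit $\Delta$ and vary the point: since $N$ is transitive on $\calP$ and stabilizes $\Delta$ setwise, the number $c$ of blocks of $\Delta$ through a point is constant; disjointness forces $c\le 1$, nonemptiness of $\calD(\Delta)$ forces $c=1$, and flag-counting gives $k|\Delta|=v$. The paper instead fixes a point $\a$ and varies the $N$-orbit: using $G=NG_\a$ it observes that $G_\a$ permutes the $N$-orbits $\Delta_1,\dots,\Delta_\ell$ transitively, so $|\calD(\a)\cap\Delta_i|$ is independent of $i$; disjointness makes this constant $\le 1$, and since the $\Delta_i$ cover $\calD(\a)$ it equals~$1$, yielding $\ell=r$ and $|\Delta_i|=b/r$. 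Your route is marginally more self-contained in that it uses only the regularity of $N$ on $\calP$ and not the factorization $G=NG_\a$; the paper's route has the small bonus of simultaneously identifying the number of $N$-orbits as $r$.

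Two minor points worth tightening. First, your justification that $|\Delta|>1$ is incomplete: intransitivity of $N$ on $\calB$ gives only $|\Delta|<|\calB|$. The missing ingredient is Lemma~\ref{faithful} (faithfulness on $\calB$) together with $N\neq 1$, which forces some and hence (by $G$-transitivity on the $N$-orbits) every $N$-orbit to have size $>1$; alternatively, Lemma~\ref{imprimitiveonB}(2) is vacuous when $|\Delta|=1$, so your counting still runs and \emph{a posteriori} gives $|\Delta|=v/k>1$ for a non-trivial design. Second, ``$c=0$ forces $\Delta$ empty'' is not quite the right phrasing: $c=0$ would mean every block in $\Delta$ has no incident points, contradicting $k\ge 2$. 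Your alternative phrasing via $\calD(\Delta)$ being an $N$-invariant nonempty subset of $\calP$ is cleaner and matches what you anticipated.
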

	
	\begin{proof}
		Let $\Delta$ be an orbit of $N$ on $\calB$.
		Since $N$ is a normal subgroup of $G$, $\Delta^G$ is a $G$-invariant partition of $\calB$, say $\Delta^G=\{\Delta_1,\Delta_2,\dots,\Delta_\ell\}$.
		In particular, the orbits of $N$ on $\calB$ have equal size.
		
		Since $G=NG_\a$, the stabilizer $G_\a$ is transitive on the orbit set $\{\Delta_1,\Delta_2,\dots,\Delta_\ell\}$.
		By Lemma \ref{imprimitiveonB}\,(2), we have that $|\calD(\a)\cap\Delta_i|=0$ or 1 for $1\le i\le\ell$.
		It follows that $|\calD(\a)\cap\Delta_i|=1$ for each $i$ with $1\le i\le\ell$.
		Thus $\ell=|\calD(\a)|=r$, and $|\Delta_i|={|\calB|\over\ell}={b\over r}={v\over k}$.
	\end{proof}
	
	The next lemma determines the designs $\calD$ in the case where $G^\calP$ is affine.
	
	\begin{lemma}\label{HA-}
		Let $N=\mathbb{Z}_p^d$ where $p$ is a prime and $d$ is a positive integer.
		Then $\mathcal{D}$ is a subdesign of $\AG_i(d,q)$ for some $i$ with $1\le i\le d-1$.
		In particular, $\calD$ is non-symmetric.
	\end{lemma}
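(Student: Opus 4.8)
The plan is to use the regular action of $N=\mathbb{Z}_p^d$ on $\calP$ to realise every block as an affine subspace of $V=\mathbb{F}_p^d$, and then simply read off the subdesign structure. First I would identify $\calP$ with $V=\mathbb{F}_p^d$ so that $N$ becomes the full translation group and the fixed point $\a$ is the zero vector, giving $G=V\rtimes G_\a$ with $G_\a\le\GL(V)$. Fix a block $\b$. By Lemma~\ref{orbit} the $N$-orbit of $\b$ has length $v/k$, so the orbit–stabiliser relation together with $|N|=v$ yields $|N_\b|=k$. Since $N_\b$ is a subgroup of the elementary abelian group $N\cong(V,+)$, it corresponds to an $\mathbb{F}_p$-subspace $U\le V$ with $|U|=k$; writing $k=p^i$ we have $\dim_{\mathbb{F}_p}U=i$.

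Next I would show that each block is a coset of such a subspace. As $N_\b$ fixes $\b$, each translation by $u\in U$ carries $\calD(\b)$ onto itself, so $\calD(\b)$ is a union of cosets of $U$; because $|\calD(\b)|=k=|U|$, it must be a single coset, say $\calD(\b)=w+U$. Flag-transitivity makes all blocks $G$-equivalent, so the stabilisers $N_\b$ are mutually conjugate in $G$ and hence all correspond to $i$-dimensional subspaces of $V$; thus every block of $\calD$ is a coset of an $i$-dimensional subspace. Therefore the blocks of $\calD$ form a subset of the blocks of $\AG_i(d,p)$ on the common point set $V$ with the same (inclusion) incidence, i.e.\ $\calD$ is a subdesign of $\AG_i(d,p)$. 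Non-triviality forces $2\le k<v$, whence $1\le i\le d-1$.

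Finally, for the non-symmetric claim I would observe that the $N$-orbit of $\b$ consists of all $p^{d-i}$ cosets of $U$, which form a parallel class of pairwise disjoint blocks; since $i\le d-1$ there are at least two of them, so $\calD$ contains two disjoint blocks. A symmetric $2$-design has the property that any two distinct blocks meet in exactly $\lambda\ge1$ points, which is incompatible with the existence of two disjoint blocks, so $\calD$ is non-symmetric. Alternatively, one may note that $G$-local primitivity is symmetric in the roles of points and blocks, so if $\calD$ were symmetric its dual would again be a $G$-locally primitive $2$-design, and Lemma~\ref{loc-pri1} applied to the dual would force $G^\calB$ to be point-primitive, contradicting the standing assumption that $G^\calB$ is non-quasiprimitive.

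The conceptual steps are short; the only point demanding care is the bookkeeping that pins down $|N_\b|=k$ and then promotes the set-stabiliser condition $\calD(\b)+U=\calD(\b)$ to the exact equality $\calD(\b)=w+U$ via the cardinality match, together with checking that the direction subspaces of all blocks share the common dimension $i$. I expect this to be routine given Lemma~\ref{orbit}; the genuinely new input is merely the translation between the permutation-group data ($N$-orbit lengths and point stabilisers) and the linear-algebraic description of blocks as flats.
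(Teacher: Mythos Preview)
Your argument is correct and follows the same overall strategy as the paper: identify $\calP$ with $V=\FF_p^d$, show that $N_\b$ corresponds to an $i$-dimensional subspace $U$, and conclude that each block is a coset of such a subspace. Two tactical choices differ slightly. To see that $\calD(\b)$ is a single $U$-coset, the paper invokes local primitivity directly (the nontrivial normal subgroup $N_\b\lhd G_\b$ must be transitive on $\calD(\b)$, so $\calD(\b)=0^{N_\b}=W$), whereas you extract $|N_\b|=k$ from Lemma~\ref{orbit} via orbit--stabiliser and then use the cardinality match $|\calD(\b)|=|U|$; both are clean, and yours avoids a second appeal to primitivity. For non-symmetry, the paper gives the arithmetic argument ($k\mid v$ together with $\lambda(v-1)=r(k-1)$ forces $r\mid\lambda$), while your parallel-class observation that the $N$-orbit of $\b$ supplies two disjoint blocks is an equally valid and perhaps more transparent route; your duality alternative also works under the standing hypothesis that $G^\calB$ is non-quasiprimitive.
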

	
	\begin{proof}
		Regard $\mathcal{P}$ as a vector space $V=\mathbb{F}_p^d$.
		Then $G \leqslant \AGL(d,p)$, and so
		$$G=N \rtimes G_\a=\widehat{V} \rtimes G_0\le\widehat{V} \rtimes \GL(d,p),$$
		where $G_0\le\GL(d,p)$ is irreducible.
		Let $\b\in\calB$ contain the point $\a=0$, and
		let $\Delta=\b^N=\{\b_1,\b_2,\cdots,\b_s\}$.
		Since $G^\calB$ is imprimitive, the $N$-orbits on $\calB$ form a $G$-invariant partition of $\calB$.
		
		If $N_\b=1$, then $N$ is semi-regular on $\calB$ and each $N$-orbit on $\calB$ has length $|N|=v$, which contradicts to Lemma \ref{orbit}
		
		Thus the stabilizer $N_\b$ is a non-identiy proper subgroup of $N$, and so
		$$N_\b =\widehat{W}\cong \mathbb{Z}_p^i,\ 0< i < d.$$
		Since $\mathcal{D}$ is $G$-locally primitive, $N_\b \triangleleft G_\b$ is transitive on $\calD(\b)$.
		Thus we have that
		$$\b=0^{N_\b}=0^{\widehat{W}}=\{0+w\,|\,w \in W\}=W.$$
		Thus every element of $\calB$ containing the point $0$ is an $ i $-subspace of $ V $. In other word,
		$$ \calD(0)=\b^{G_{0}}=\{W_1,\cdots,W_r\}, $$
		where $ W_1,\cdots,W_r< V $ are $ i $-subspaces.
		Therefore we have that
		$$\calB=\b^G=\b^{G_0 N}=\{W_1,\cdots,W_r\}^N=\bigcup_{j=1}^r\,\{W_j+v\,|\,v \in V\},$$
		that is to say, the block set $\calB$ consists of some of the cosets of $i$-subspaces of $V=\mathbb{F}_p^d$.
		So, up to isomorphism, $\mathcal{D}$ is a subdesign of $\AG_i(d,q)$.
		
		Finally, if $\calD$ is symmetric, then the block size $|\b|=k=r$ divides the number of points $v=|\calP|$.
		Since $\lambda (v-1)=r(k-1)$, we get $r$ dividing $\lambda$, which is a contradiction as $r>\lambda$.
		So $\calD$ is non-symmetric.
	\end{proof}

	\section{Product action on points} \label{sec:PA}
	
	In this section, we assume that $\mathcal{D}=(\mathcal{P},\mathcal{B},\calI)$ is a $G$-locally primitive $ 2 $-design, and further, $G^\calP$ is primitive of type $\PA$, and $G^\calB$ is quasiprimitive.
	
	\begin{lemma}\label{lem:PA-on-P}
		Under the assumption made above,
		$G^\calB$ is of type $\PA$, $\SD$ or $\CD$, and $\soc(G)$ is flag-transitive on $\calD$.
	\end{lemma}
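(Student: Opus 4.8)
The plan is to set $N=\soc(G)=T^m$ and proceed in two stages: first prove that $N$ itself is flag-transitive, and then read off the admissible quasiprimitive types of $G^\calB$ from the abstract structure of $N$. Since $G^\calP$ is of type $\PA$, Theorem~\ref{thm:O'Nan-Scott} gives $N=\soc(G)=T^m$ with $T$ nonabelian simple and $m>1$, and $N$ is transitive but \emph{not} regular on $\calP$, so in particular $N_\a\ne 1$. Because $G^\calB$ is quasiprimitive and $N\lhd G$, the subgroup $N$ is transitive on $\calB$ as well; thus $N$ is transitive on both parts of the incidence graph $\Ga$.

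To establish flag-transitivity of $N$, I would argue exactly as in Lemma~\ref{sdcd}. Since $N_\a\lhd G_\a$ and $G_\a$ is primitive on $\calD(\a)$, the group $N_\a$ either is transitive on $\calD(\a)$ or fixes $\calD(\a)$ pointwise. To exclude the second alternative, suppose $N_\a$ fixes $\calD(\a)$ pointwise, so that $N_\a\le N_\b$ for every $\b\in\calD(\a)$. As $N$ is transitive on both $\calP$ and $\calB$, we get $v=|N:N_\a|\ge|N:N_\b|=b$, and Fisher's inequality $v\le b$ then forces $v=b$, whence $N_\a=N_\b$. Connectivity of $\Ga$ propagates this equality along edges and shows that $N_\a$ fixes every vertex of $\Ga$, so $N_\a=1$ by the faithfulness of $N$ on $\calP\cup\calB$ (Lemma~\ref{faithful}); this contradicts $N_\a\ne 1$. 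Hence $N_\a$ is transitive on $\calD(\a)$, and since $N$ is point-transitive, $N=\soc(G)$ is flag-transitive on $\calD$.

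It remains to pin down the type of $G^\calB$. By Lemma~\ref{faithful}, $G$ acts faithfully on $\calB$, so $G^\calB\cong G$ as abstract groups and $\soc(G^\calB)=N=T^m$ with $T$ nonabelian and $m>1$. This immediately excludes type $\HA$ (whose socle is abelian) and type $\AS$ (which requires $m=1$). Moreover, as $G^\calP$ is of type $\PA$, the group $G$ has a unique minimal normal subgroup; since types $\HS$ and $\HC$ require two minimal normal subgroups, and the number of minimal normal subgroups is an abstract invariant of $G$, both are excluded. Finally, flag-transitivity of $N$ forces $N_\b$ to be transitive on $\calD(\b)$, a set of size $k\ge 2$, so $N_\b\ne 1$ and $N$ is not regular on $\calB$; this rules out type $\TW$. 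Therefore $G^\calB$ is of type $\PA$, $\SD$ or $\CD$, as claimed.

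The main obstacle is the flag-transitivity step, i.e.\ excluding the possibility that $N_\a$ acts trivially on $\calD(\a)$. Here the hypothesis that $G^\calB$ is quasiprimitive is essential: it is precisely what guarantees that $N$ is transitive on $\calB$, which in turn supplies the inequality $v\ge b$ that collides with Fisher's inequality. Once flag-transitivity is secured, the type determination is bookkeeping, relying on the fact that $\soc(G)$ and its number of minimal normal subgroups are abstract invariants shared by the faithful actions $G^\calP$ and $G^\calB$.
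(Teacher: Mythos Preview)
Your proof is correct, and in one respect more self-contained than the paper's. There are two points of divergence worth noting.

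For the type determination of $G^\calB$, the paper simply invokes Theorem~1.2 of \cite{s-arc} on locally primitive bipartite graphs, whereas you argue directly from the abstract structure of $G$: the socle $T^m$ with $m>1$ nonabelian rules out $\HA$ and $\AS$; the uniqueness of the minimal normal subgroup (inherited from the $\PA$ type on $\calP$) rules out $\HS$ and $\HC$; and flag-transitivity of $N$ forces $N_\b\ne 1$, ruling out $\TW$. Your argument is elementary and avoids the external dependency, which is a genuine gain.

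For flag-transitivity, the paper takes a slightly different and marginally shorter route: instead of supposing only that $N_\a$ acts trivially on $\calD(\a)$ and then invoking Fisher's inequality to force $v=b$ and $N_\a=N_\b$, the paper supposes \emph{both} $N_\a^{\calD(\a)}=1$ and $N_\b^{\calD(\b)}=1$ simultaneously, obtaining $N_\a\le N_\b$ and $N_\b\le N_\a$ directly, hence $N_\a=N_\b$ without Fisher. The conclusion is then that at least one of $N_\a^{\calD(\a)}$, $N_\b^{\calD(\b)}$ is nontrivial (hence transitive), which together with transitivity of $N$ on both $\calP$ and $\calB$ suffices for flag-transitivity. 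Your version, patterned on Lemma~\ref{sdcd}, proves the slightly stronger statement that $N_\a$ is transitive on $\calD(\a)$; both are valid.
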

	\begin{proof}
		The former statement of the lemma follows from Theorem~1.2 of \cite{s-arc}.
		
		Let $N=\soc(G)$, and let $ (\alpha,\b $) be a flag.
		Suppose that $N_\a$ fixes all elements of $\calD(\a)$, and $N_\b$ fixes all elements of $\calD(\b)$.
		Then $N_\b\le N_\a$ and $N_\a\le N_\b$, so that $N_\a=N_\b$.
		It then follows that $N_\a$ fixes all elements of $\calP\cup\calB$, and so $N_\a=1$ by Lemma~\ref{faithful}, which is not possible since $G^\calP$ is of PA type.
		Thus either $N_\a^{\calD(\a)}\not=1$, or $N_\b^{\calD(\b)}\not=1$.
		Since both $G_\a^{\calD(\a)}$ and $G_\b^{\calD(\b)}$ are primitive, it follows that either $N_\a^{\calD(\a)}$ is transitive, or $N_\b^{\calD(\b)}$ is transitive.
		Note that $N$ is transitive on $\calP$ and $\calB$, it implies that $N$ is transitive on the flags of $\calD$, completing the proof.
	\end{proof}

	To procedure our proof, we need to introduce the structures and the actions of the primitive group $G^\calP$ and its socle $N$.
	Since $G^\calP$ is a primitive permutation group of type PA by assumption, the set $ \mathcal{P} $ can be written as a product of an underlying set $ \Omega $:
	$$\mathcal{P}=\{(\omega_1,\cdots,\omega_m)\,|\,\omega_i \in\Omega \}\cong \Omega^m,\ m>1,$$
	and $N=\soc(G)=T^m \lhd \ G \leqslant H^m \rtimes \S_m,$ where $H \leqslant \Sym(\Omega)$ is an almost simple group and primitive on $\Omega$ with simple socle $T$.
	Since $N$ is a minimal normal subgroup, $G$ acts transitively by conjugation on the set of simple direct factors of $T^m$.
	Then an element $g\in G \leqslant H^m \rtimes \S_m$ has the form $g=(h_1,\cdots,h_m)\sigma,$ where $(h_1,\cdots,h_m) \in H^m$ and $\sigma \in \S_m$, and $g$ acts on $\mathcal{P}$ by
	$$
	(\omega_1,\cdots,\omega_m)^{(h_1,\cdots,h_m)\sigma}=(\omega_{1^{\sigma^{-1}}}^{h_{1^{\sigma^{-1}}}},\cdots,\omega_{m^{\sigma^{-1}}}^{h_{m^{\sigma^{-1}}}}).
	$$	
	
	Here in order to make things clear, we explicitly write the socle $N$ as an direct product in both \textit{internal} and \textit{external} ways, with their differences shown as
	$$N=\begin{cases}
		T_1 \times \cdots \times T_m,\quad & \text{internal with each\ } T_i\leqslant N,\, T_i\cong T ; \\
		T\times \cdots \times T,\quad & \text{external with $ m $ copies of\ } T\nleqslant N.\\
	\end{cases}$$
	For a point $\alpha=(\omega_1,\cdots,\omega_m) \in \mathcal{P},$ the stabilizer can be written as:
	$$N_{\alpha}=\begin{cases}
		(T_1)_{\alpha} \times \cdots \times (T_m)_{\alpha}\text{\ with each\ }(T_i)_{\alpha}\leqslant N_\alpha,\quad & \text{internally};\\
		T_{\omega_1} \times \cdots \times T_{\omega_m}=\{(t_1,\cdots,t_m)\,|\,t_i\in T_{\omega_i}\},\quad & \text{externally}.\\
	\end{cases}$$\vs

	The following three lemmas exclude all the three possible types of $G^\calB$.
	
	\begin{lemma} \label{PACD}
		The quasiprimitive group $G^\calB$ is not of type $\CD$.
	\end{lemma}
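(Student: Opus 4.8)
The plan is to derive a contradiction from the assumption that $G^\calB$ is of type $\CD$ by exploiting the tension between the diagonal structure of the block-stabilizer and the local primitivity (hence flag-transitivity via Lemma~\ref{loc-pri1}) of the design. By Lemma~\ref{lem:PA-on-P}, under our standing hypotheses $\soc(G)=N=T^m$ is flag-transitive on $\calD$, so I may work entirely inside $N$ and its point- and block-stabilizers. Since $G^\calP$ is of type $\PA$, we have the product structure $\mathcal{P}\cong\Omega^m$ with $N_\a=T_{\omega_1}\times\cdots\times T_{\omega_m}$ a full direct product of proper subgroups of $T$. If $G^\calB$ were of type $\CD$, then on the block side $N$ would act with stabilizer $N_\b\cong T^\ell$ for some $\ell>1$ dividing $m$, realized diagonally: the $m$ simple factors are grouped into $\ell$ blocks of size $m/\ell$, and within each group $N_\b$ projects onto a full diagonal copy of $T$.

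\textbf{The key steps,} in order, are as follows. First I would compare the orders of the two stabilizers using flag-transitivity of $N$: since $N$ is flag-transitive, $N=N_\a N_\b$ in the sense that $r=|N_\a:N_{\a\b}|$ and $k=|N_\b:N_{\a\b}|$, and $vr=bk$ with $v=|N:N_\a|$, $b=|N:N_\b|$. Using $N_\a=\prod_i T_{\omega_i}$ and the diagonal shape of $N_\b\cong T^\ell$, I would compute $v=\prod_i |T:T_{\omega_i}|$ and $b=|T|^{m-\ell}$, and then force Fisher's inequality $b\ge v$ (established in Section~\ref{SectionPre}) together with the flag-counting identity $\lambda(v-1)=r(k-1)$. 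Second, and more decisively, I would apply Lemma~\ref{product}: the decomposition $N=T^m=(T^{m/\ell})\times(T^{m-m/\ell})$ splits $N_\a$ compatibly as a direct product $L_1\times L_2$ (because $N_\a$ is itself a full direct product over the coordinates), while the diagonal subgroup $N_\b\cong T^\ell$ can be shown to contain, or be contained in, a subgroup respecting the same coordinate partition. The aim is to exhibit a subgroup $K=K_1\times K_2\le N_\b$ that is transitive on $\calD(\b)$, contradicting Lemma~\ref{product}, which asserts that no such coordinate-respecting direct-product subgroup can be transitive on $\calD(\b)$ in a non-trivial design.

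\textbf{The main obstacle} I anticipate is arranging the coordinate splitting so that the diagonal block-stabilizer genuinely decomposes as $K_1\times K_2$ with $K_i$ lying in the respective factors of $N=G_1\times G_2$, since the whole point of a diagonal (CD) action is that $N_\b$ does \emph{not} respect the individual coordinates but only the coarser grouping into $\ell$ diagonal blocks. The trick will be to choose the splitting of $N$ at the level of the $\ell$ diagonal blocks rather than the $m$ coordinates: partition the $\ell$ groups into two nonempty families, set $G_1,G_2$ to be the corresponding sub-products of $T^m$, and verify that $N_\b\cong T^\ell$ splits as $(N_\b\cap G_1)\times(N_\b\cap G_2)$ because distinct diagonal blocks involve disjoint coordinate sets. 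One then checks that $N_\a=\prod_i T_{\omega_i}$ automatically splits along the same coarse partition, so Lemma~\ref{product} applies with $K=N_\b$ itself, immediately giving that $N_\b$ is intransitive on $\calD(\b)$ and contradicting the flag-transitivity of $N$ from Lemma~\ref{lem:PA-on-P}. I expect the bulk of the work to lie in confirming that $\ell\ge2$ forces a genuinely nontrivial two-part grouping and that the induced decompositions of $N_\a$ and $N_\b$ are honest internal direct products, after which Lemma~\ref{product} closes the case.
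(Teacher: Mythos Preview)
Your proposal is correct and follows essentially the same route as the paper: invoke Lemma~\ref{lem:PA-on-P} to get $N$-flag-transitivity, note that in the $\CD$ case the $m$ simple factors group into $\ell\ge2$ diagonal blocks so that both $N_\a$ (a full coordinate-wise product from the $\PA$ structure on $\calP$) and $N_\b\cong T^\ell$ decompose compatibly along any nontrivial partition of those $\ell$ blocks, and then apply Lemma~\ref{product} with $K=N_\b$ to contradict transitivity of $N_\b$ on $\calD(\b)$. The paper's proof is exactly your ``second, more decisive'' step written tersely; your preliminary order-counting via Fisher's inequality is unnecessary but harmless.
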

	
	\begin{proof}
		Let $ N=\soc(G)$.
		Then $ \mathcal{D} $ is $ N $-flag-transitive by Lemma \ref{lem:PA-on-P}.
		
		Suppose $G^\calB$ is of type CD.
		Then $G$ is built from groups of type $ \SD $ by taking wreath product. Let $ (\alpha,\b) $ be a flag. Then $ N=T_{1}\times\cdots\times T_{\ell t} $ acts transitively on both $\mathcal{P}$ and $\mathcal{B}$, with stabilizers decomposed as
		$$\begin{array}{l}
			N_\alpha=K_{1}\times\cdots\times K_{\ell t}, \text{\ where\ } K_{i}< T_i,\\
			N_\b=X_{1}\times\cdots\times X_{\ell}, \text{where $X_{i}< T_{(i-1)t+1}\times\cdots\times T_{it}$ and $X_{i}\cong T$}.
		\end{array}$$
		This is not possible by Lemma \ref{product}.
	\end{proof}
	
	\begin{lemma}\label{PASD}
		The quasiprimitive group $G^\mathcal{B}$ is not of type $\SD$.
	\end{lemma}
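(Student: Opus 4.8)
The plan is to mirror the strategy used in Lemma~\ref{PACD}, namely to invoke the product-decomposition obstruction of Lemma~\ref{product}. Since $G^\calP$ is of type $\PA$ with socle $N=\soc(G)=T_1\times\cdots\times T_m$ acting component-wise on $\calP\cong\Omega^m$, the point stabilizer decomposes as an internal direct product $N_\a=(T_1)_\a\times\cdots\times(T_m)_\a$, as recorded in the preamble. By Lemma~\ref{lem:PA-on-P}, $N$ is flag-transitive on $\calD$, so $N$ is transitive on $\calB$ as well. Suppose for contradiction that $G^\calB$ is of type $\SD$. First I would write down the stabilizer structure forced by the $\SD$ type: for a flag $(\a,\b)$, the block stabilizer $N_\b$ is a diagonal subgroup of $N=T^m$ isomorphic to $T$, so that $N_\b\cong T$ embedded diagonally across all $m$ simple factors.

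The heart of the argument is to contrast the \emph{diagonal} shape of $N_\b$ (type $\SD$) with the \emph{product} shape of $N_\a$ (type $\PA$), and to show these cannot coexist with flag-transitivity of $N$. The obstruction in Lemma~\ref{product} requires that \emph{both} $G$ (here $N$) and the point stabilizer $N_\a$ split as direct products $G_1\times G_2$ and $L_1\times L_2$ compatibly, with a subgroup $K=K_1\times K_2\le N_\b$ that is transitive on $\calD(\b)$. Since $m>1$, I would partition the $m$ simple factors into two nonempty blocks, writing $N=G_1\times G_2$ where $G_1=T_1\times\cdots\times T_j$ and $G_2=T_{j+1}\times\cdots\times T_m$ for some $1\le j<m$; the point stabilizer then splits correspondingly as $N_\a=L_1\times L_2$ with $L_i\le G_i$. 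Because $N_\b\cong T$ is a full diagonal, its projections to $G_1$ and to $G_2$ are each isomorphic to $T$, and I expect $N_\b$ itself to lie inside (or at least to supply a transitive product-subgroup of the form) $K_1\times K_2$ with $K_i\le G_i$. Applying Lemma~\ref{product} then yields that no such $K$ can be transitive on $\calD(\b)$, contradicting the flag-transitivity of $N$.

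The main obstacle I anticipate is precisely the point where Lemma~\ref{product} is applied: a genuinely diagonal subgroup $N_\b\cong T$ is \emph{not} itself a direct product $K_1\times K_2$ with $K_i\le G_i$ (a diagonal is the antithesis of such a split), so one cannot directly take $K=N_\b$. The correct move is to observe that flag-transitivity of $N$ forces $N_\b$ to be transitive on $\calD(\b)$, and then to extract from $N_\b$ a product-subgroup that is still transitive, or else to pass to a quotient action in which the diagonal becomes reducible. Concretely, I would consider the action of $N_\b$ on $\calD(\b)$ and use the transitivity of $N_\a$ on $\calD(\a)$ together with $N=N_\b N_{\a\b}$ (from flag-transitivity) to manufacture the required product subgroup $K=K_1\times K_2$; the delicate step is verifying that $K$ remains transitive on $\calD(\b)$ after this manipulation. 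An alternative route, should the direct application fail, is to argue via orders: flag-transitivity gives $v=b$ forcing a symmetric design, and then $N_\a=N_\b$ as in the proof of Lemma~\ref{sdcd}, whence $N_\a$ fixes $\calP\cup\calB$ pointwise by connectivity of the incidence graph $\Ga$, contradicting faithfulness (Lemma~\ref{faithful}) since $N_\a\ne1$ in the $\PA$ case. I would present whichever of these two contradictions—the product obstruction or the faithfulness obstruction—emerges most cleanly from the $\SD$ stabilizer structure.
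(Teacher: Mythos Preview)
Your proposal has a genuine gap, and you have in fact put your finger on it yourself: the diagonal block stabilizer $N_\b=\{(t,\dots,t):t\in T\}$ admits \emph{no} nontrivial subgroup of the form $K_1\times K_2$ with $K_i\le G_i$, since any element $(k_1,1,\dots,1)$ lying in the diagonal forces $k_1=1$. Hence Lemma~\ref{product} simply cannot be fed any transitive product subgroup $K\le N_\b$, and your first workaround (``extract from $N_\b$ a product-subgroup that is still transitive'') is impossible, not merely delicate. Your second workaround also does not go through: flag-transitivity of $N$ does \emph{not} yield $v=b$ or $N_\a\le N_\b$; in the proof of Lemma~\ref{sdcd} that conclusion was reached only under the extra hypothesis that $N_\a$ acts trivially on $\calD(\a)$, which you have no reason to assume here (indeed $N_\a$ is typically transitive on $\calD(\a)$).

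The paper's proof abandons Lemma~\ref{product} entirely for the $\SD$ case and instead exploits Lemma~\ref{lambda} directly. Writing $N_\a=Q_1\times\cdots\times Q_m$ and $N_\b$ as the full diagonal, one applies the $\lambda$-formula with test elements $g=(g_1,1,\dots,1)$ to force $Q_i\ne Q_j$ for all $i\ne j$. The argument then splits on whether $Y:=Q_1\cap\cdots\cap Q_m$ is trivial: if $Y\ne1$, the subgroup $X=Y^m$ sits strictly between $N_{\a\b}$ and $N_\a$, and $G_{\a\b}<XG_{\a\b}<G_\a$ contradicts maximality of $G_{\a\b}$; if $Y=1$, then $N_\b$ is regular on $\calD(\b)$, forcing $G_\b^{\calD(\b)}$ to be primitive of type $\HS$, and an analysis of the second regular normal subgroup (which must lie in $\S_m$) produces a contradiction with the distinctness of the $Q_i$. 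None of this is visible from the product-obstruction viewpoint; the $\SD$ case genuinely requires a different mechanism.
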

	
	\begin{proof}
		We write the socle
		$$N=\{(t_1,\cdots,t_m)\,|\,t_i\in T\}=T\times \cdots \times T \cong T^m$$ as a direct product in \textit{external} way, where each element $(t_1,\cdots,t_m)$ in $N$ is an ordered $m$-tuple of $T$.
		
		Since $G\cong G^\calB$ is a quasiprimitive permutation group of type $\SD$, we have that
		$$G\leqslant \{(a_1,\cdots,a_m)s\,|\,a_i \in \Aut(T),\ s \in \S_m,\ T a_i=T a_j,\ \text{for any}\ i,j \}.$$
		Without loss of generality, we may assume that $\b\in\mathcal{B}$ is such that
		$$G_\b\leqslant \{(a,a,\dots,a)s\,|\,a \in \Aut(T),\ s \in \S_m\}.$$
		Fix a flag $(\a,\b)\in(\calP,\calB)$.
		Write $N_\b$ and $N_\a$ externally as
		\[\begin{array}{l}
			N_\b=\{(t,\cdots,t)\,|\,t \in T\} \cong T,\\
			N_\alpha=\{(t_1,\cdots,t_m)\,|\,t_i \in Q_i\}=Q_1 \times \cdots \times Q_m,
		\end{array}\]
		where these $Q_i$ are isomorphic subgroups of $T$.
		Let $Y=Q_1 \cap \cdots \cap Q_m<T$.
		Then
		$$N_{\alpha \b}=N_{\alpha}\cap N_{\b}=\{(y,\cdots,y)\,|\,y \in Y\}\cong Y. $$
		
		By Lemma \ref{lem:PA-on-P}, $ \mathcal{D} $ is $ N $-flag-transitive.
		Choose $ g_1 \in T\setminus Q_1 $. We have that $ g=(g_1,1,\cdots,1)\in N \setminus N_{\alpha}$.
		By Lemma~\ref{lambda}, we have that
		$$\lambda=\frac{|N_\b N_\alpha \cap N_\b N_\alpha g|}{|N_\b|}>0.$$  		
		Hence there exists an element $x\in N_\b N_\alpha \cap N_\b N_\alpha g$, and so
		\[x=u_1v_1=u_2v_2g,\]
		for some elements $u_1,u_2\in N_\b$ and $v_1,v_2\in N_\a$, which we write as
		\[\begin{array}{l}
			u_1=(s_1,s_1,\dots,s_1)\in N_\b,\\
			u_2=(s_2,s_2,\dots,s_2)\in N_\b,\\
			v_1=(t_1,t_2,\cdots,t_m)\in N_\a,\\
			v_2=(t'_1,t'_2,\cdots,t'_m)\in N_\a.
		\end{array}\]
		Now $u_2^{-1}u_1=v_2gv_1^{-1}$, and, letting $t=s_2^{-1}s_1$, we have that
		\[\begin{array}{rcl}
			(t,t,\dots,t)&=&(s_2^{-1}s_1,s_2^{-1}s_1,\dots,s_2^{-1}s_1)\\
			&=&u_2^{-1}u_1\\
			&=&v_2gv_1^{-1}\\
			&=&(t'_1,t'_2,\cdots,t'_m)(g_1,1,\cdots,1)(t_1,t_2,\cdots,t_m)^{-1}\\
			&=&(t_1'g_1t_1^{-1},t_2't_2^{-1},\dots,t_m't_m^{-1}).
		\end{array}\]
		
		Then we will have that
		$$t'_1 g_1 t_1^{-1}=t'_2 t_2^{-1}=\cdots=t'_m t_m^{-1}=t\in Q_2\cap Q_3 \cap \cdots \cap Q_m.$$
		If there exists $i_{0}\ge2$ such that $Q_{i{_{0}}}=Q_1$, then
		$$t\in Q_1\cap Q_2 \cap \cdots \cap Q_m.$$
		In particular, $t=t'_1 g_1 t_1^{-1}\in Q_1$ and we get $g_1\in Q_1$, which is a contradiction to the choice of $g_1$.
		It follows that $Q_1\ne Q_i$ for any $i \ne 1$.
		Similarly, we obtain that $Q_i \ne Q_j$ for any $i \ne j$.\vs
		
		\noindent\textbf{Case 1.} Assume that the flag stabilizer $ N_{\alpha \b}\cong Y\ne 1 $.
		
		Let $X=\{(y_1,\cdots,y_m)\,|\,y_i \in Y\}\cong Y^{m}.$
		Then $X\le N_\a$.
		Since $Y<Q_i$, we have that $X<N_\a$, and
		$$1\not=N_{\alpha \b}<X < N_{\alpha}\lhd G_{\alpha}.$$
		Thus $G_{\a\b}\le XG_{\a\b}\le N_\a G_{\a\b}=G_\a$, and $N_{\a\b}\le X\cap G_{\a\b}\le N_\a\cap G_{\a\b}$.
		The latter implies that $X\cap G_{\a\b}=N_{\a\b}$.
		
		Suppose that $G_{\a\b}=XG_{\a\b}$.
		Then $Y^m\cong X\le G_{\a\b}\cap N_\a=N_{\a\b}\cong Y$, which is a contradiction.
		Thus $G_{\a\b}<XG_{\a\b}$.
		
		Suppose that $XG_{\a\b}=N_\a G_{\a\b}=G_\a$.
		Then
		\[{|N_\a||G_{\a\b}|\over|N_{\a\b}|}={|N_\a||G_{\a\b}|\over|N_\a\cap G_{\a\b}|}=|G_\a|={|X||G_{\a\b}|\over|X\cap G_{\a\b}|}={|X||G_{\a\b}|\over|N_{\a\b}|},\]
		and so $|X|=|N_\a|$, which is a contradiction since $X<N_\a$.
		
		We thus conclude that $G_{\a\b}<XG_{\a\b}<G_\a$, which contradicts the fact that $G_{\a\b}$ is a maximal subgroup of $G_\a$.\vs
		
		\noindent\textbf{Case 2.} Assume that the flag stabilizer $ N_{\alpha \b}\cong Y=1 $.
		
		Consider the primitive permutation group $ \overline{G_\b} $ induced by $G_\b$ on $ \mathcal{D}(\b) $, that is, $$\overline{G_\b}=G_\b^{\mathcal{D}(\b)}=G_\b/G_{(\mathcal{D}(\b))}.$$ As $$N\cap G_{(\mathcal{D}(\b))}=N_{(\mathcal{D}(\b))} \leqslant N_{\alpha \b}=1,$$ the image $ \overline{N_\b}$ is isomorphic to $ N_{\b} $, and then $\overline{G_\b}$ has a regular normal subgroup $ \overline{N_\b}\cong N_{\b}\cong T$. Note that $\HS$ is the only possible primitive type with a regular normal subgroup isomorphic to nonabelian simple $ T $.
		Hence $ \overline{G_\b} $ is of type $ \HS $.
		In addition, groups of type $ \HS $ have two regular minimal normal subgroups, which centralize each other.
		Now, suppose that $L<G_{\b}$, such that $ \overline{L} $ is the other regular minimal normal subgroup of $ \overline{G_\b} $. Then we have $\overline{L} \leqslant C_{\overline{G_\b}}(\overline{N_\b})$.		 		
		Explicitly, for any $\bar{g} \in \overline{L}$, where $g=(a,\cdots,a)s \in L < G_{\b}$, and any $\bar{x}\in\overline{N_\b}$, where $ x=(t,\cdots,t) $, $t \in T$, we have $$g^{-1}(t,\cdots,t)g(t,\cdots,t)^{-1}=(t^a t^{-1},\cdots,t^a t^{-1}) \in G_{(\calD(\b))} \cap N_{\b}=1.$$
		Then $t^a=t$ for any $ t\in T $, so $a \in C_{\Aut(T)}(T)={1}$. Hence $g=s$, where $ s\in S_m $, and $L\leqslant S_m$.
		
		Recall that $\alpha\in \calD(\b)$, and the point stabilizer $N_\alpha=Q_1 \times \cdots \times Q_m$ satisfies that $Q_i \ne Q_j \text{\ for\ } i \ne j$. For $1 \ne s \in L \leqslant S_m$,
		$$N_\alpha^s=(Q_1 \times \cdots \times Q_m)^s=Q_{1^{s^{-1}}} \times \cdots \times Q_{m^{s^{-1}}}\ne N_\alpha.$$
		Thus $L$ acts faithfully on $\calD(\b)$, and $L\cong \overline{L}$.
		Now, on the one hand, since $ T $ is transitive on the conjugacy class $\{{Q_1}^t|t\in T\}$, each element $x=(t,\cdots,t) $ in $N_\b$  maps the first component of $N_\alpha$ from $ Q_1 $ to any other element of $\{{Q_1}^t|t\in T\}$.
		On the other hand, $L\cong\overline{L} \triangleleft \overline{G_\b}  $ acts transitively on the stabilizers of points of $ \calD(\b) $ in $N$ by permuting their $ m $ components. Hence $$\{{Q_1}^t|t\in T\} = \{Q_1,\cdots,Q_m\}.$$
		At last, each element in $ Q_2 \cap \cdots \cap Q_m $ fixes all $Q_2,\cdots,Q_m$ by conjugation, so it also fixes $Q_1$, which is hence forced to be the identity, a contradiction to $Q_2 \cap \cdots \cap Q_m \ne 1$.	
		
	\end{proof}

	\begin{lemma}\label{PAPA}
		The quasiprimitive group $G^\calB$ is not of type $\PA$.
	\end{lemma}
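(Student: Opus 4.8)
The plan is to reduce this last case to a single application of Lemma~\ref{product}, taking the socle $N=\soc(G)=T^m$ (with $m>1$) as the flag-transitive group in place of $G$. By Lemma~\ref{lem:PA-on-P}, $N$ is flag-transitive on $\calD$, so $N_\b$ is transitive on $\calD(\b)$; the goal is therefore to exhibit a direct-product decomposition $N=N_1\times N_2$ that is simultaneously respected by $N_\a$ and by $N_\b$, and then to contradict this transitivity of $N_\b$.

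First I would record the two product structures carried by the simple factors $T_1\times\cdots\times T_m$ of $N$. On the point side, since $G^\calP$ is of type $\PA$, the external description set up before these lemmas gives $N_\a=Q_1\times\cdots\times Q_m$ with each $Q_i<T$ a proper subgroup. On the block side, $G$ acts faithfully on $\calB$ by Lemma~\ref{faithful}, so $N\cong\soc(G^\calB)$, and the assumed $\PA$ action on $\calB$ partitions the $m$ factors into $n>1$ coordinate blocks $B_1,\dots,B_n$; writing $T^{(j)}=\prod_{i\in B_j}T_i$, the block stabilizer decomposes as $N_\b=D_1\times\cdots\times D_n$ with each $D_j$ a proper subgroup of $T^{(j)}$ (a full diagonal if the block carries an $\SD$-component, or a point stabilizer in $T$ if it carries an $\AS$-component).

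Next I would split the $n$ coordinate blocks into two nonempty families, which is possible precisely because $n>1$: choosing $1\le s<n$ and setting $N_1=T^{(1)}\times\cdots\times T^{(s)}$ and $N_2=T^{(s+1)}\times\cdots\times T^{(n)}$ gives $N=N_1\times N_2$. Because the $D_j$ lie in distinct coordinate groups, $N_\b=(D_1\times\cdots\times D_s)\times(D_{s+1}\times\cdots\times D_n)=K_1\times K_2$ with $K_i\le N_i$; and because the $Q_i$ respect every coordinate splitting, $N_\a=L_1\times L_2$ with each $L_i<N_i$ proper (being a product of proper subgroups $Q_i<T$). Applying Lemma~\ref{product} to the $N$-flag-transitive design with the decompositions $N=N_1\times N_2$, $N_\a=L_1\times L_2$ and the subgroup $K=N_\b=K_1\times K_2$ then forces $N_\b$ to be intransitive on $\calD(\b)$, contradicting the flag-transitivity of $N$ and settling the case.

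This argument mirrors the $\CD$ case of Lemma~\ref{PACD} and is far shorter than the $\SD$ case of Lemma~\ref{PASD}; the point is that in the $\PA$ action $N_\b$ genuinely splits over at least two coordinate groups, whereas the $\SD$ stabilizer is a single full diagonal and so cannot be fed into Lemma~\ref{product} directly. The only real obstacle I anticipate is the bookkeeping needed to justify, from the O'Nan-Scott-Praeger description (Theorem~\ref{thm:O'Nan-Scott}), that the $\PA$ action on $\calB$ really does produce $n>1$ together with the product form $N_\b=D_1\times\cdots\times D_n$ over these coordinate blocks, so that the split $N=N_1\times N_2$ is legitimate and both $N_\a$ and $N_\b$ factor through it.
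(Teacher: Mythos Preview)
Your argument works cleanly when $G^\calB$ is \emph{primitive} of type $\PA$: then indeed $N_\b=Y_1\times\cdots\times Y_m$ with $Y_i<T_i$ (this is exactly the ``in the primitive case, $N_\o\cong H^m$'' clause in Theorem~\ref{thm:O'Nan-Scott}), and Lemma~\ref{product} applies immediately. The paper does this step in one line too.

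The gap is in the quasiprimitive but imprimitive case, and it is not bookkeeping. For a quasiprimitive $\PA$ action the stabilizer $N_\b$ is in general only a \emph{subdirect} subgroup of $\pi_1(N_\b)\times\cdots\times\pi_m(N_\b)$; it need not decompose as a direct product $D_1\times\cdots\times D_n$ over any partition of the simple factors. A concrete obstruction: with $N=T\times T$ and $H<T$ proper, let $N$ act on the cosets of $K=\{(h,h):h\in H\}$. This is quasiprimitive of type $\PA$ (the projections are the proper subgroup $H$, so not $\SD$/$\CD$; $K\ne1$, so not $\TW$), yet $N_\b=K$ does not split as $K_1\times K_2$ with $K_i\le T_i$. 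Your parenthetical about ``$\SD$-components'' and ``$\AS$-components'' describes primitive product actions, not the general quasiprimitive $\PA$ type, so the asserted coordinate-block decomposition of $N_\b$ simply is not available.

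The paper's proof confronts exactly this difficulty. After disposing of the primitive case via Lemma~\ref{product}, it takes a maximal block of imprimitivity $\Delta\ni\b$ on $\calB$; since $G$ is primitive on $\Delta^G$, the stabilizer $N_\Delta$ \emph{does} split as $H_1\times\cdots\times H_m$. Choosing $\a_1,\a_2\in\calD(\b)$ differing in a single $\Omega$-coordinate, one manufactures $x_0=(t_1,1,\dots,1)\in N_\Delta$ sending $\a_1$ to $\a_2$, and Lemma~\ref{imprimitiveonB}(2) forces $x_0\in N_\b$. The normal closure $R=\langle x_0^{G_\b}\rangle\lhd G_\b$ is then transitive on $\calD(\b)$ and, because conjugation by elements of $G_\b\le H^m\rtimes\S_m$ merely permutes single-coordinate elements, $R=R_1\times\cdots\times R_m$ with $R_i\le T_i$. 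Now Lemma~\ref{product} applies to $K=R$, not to $N_\b$. The essential extra idea you are missing is this passage from $N_\b$ (which need not split) to a transitive subgroup $R\le N_\b$ that does split, manufactured via the imprimitivity structure and Lemma~\ref{imprimitiveonB}.
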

	
	\begin{proof}
		Suppose that $G^\mathcal{B}$ is of type PA, with $ N=\soc(G)=T_{1}\times\cdots\times T_{m}$.
		By Lemma \ref{lem:PA-on-P}, $ \mathcal{D} $ is $ N $-flag-transitive.
		Fix a flag $(\alpha,\b)$.
		
		Since $G^\calP$ is primitive, we may write $\calP=\Omega_1\times\dots\times\Omega_m$, so that a point $\a\in\calP$ has the form $\a=(\o_1,\dots,\o_m)$ with $\o_i\in\Omega_i$, and $N_\alpha=X_{1}\times\cdots\times X_{m}$, where $X_{i}< T_i$.
		
		If $G^\calB$ is primitive, then $N_\b=Y_{1}\times\cdots\times Y_{m}$ with $Y_{i}< T_i$,
		which is a contradiction by Lemma \ref{product}.
		Hence $G^{\mathcal{B}}$ is quasiprimitive and imprimitive.
		
		Let $\Delta\subset\calB$ be a maximal block of imprimitivity for the action $G^\calB$.
		Then $\Delta^G$ is a $G$-invariant partition of $\calB$, and $G$ induces a primitive permutation group on $\Delta^G$ with stabilizer $G_\Delta$.
		Thus $\Delta^G=\Sigma_1\times\dots\times\Sigma_m$, and $N_\Delta=H_1\times\dots\times H_m$ with $H_i<T_i$ such that $H_i$ is transitive on $\Sigma_i$ with $1\le i\le m$.
		
		Without loss of generality, pick an element $\b\in\Delta$ and two points $\a_1,\a_2\in\calD(\b)$ with
		$$\alpha_1=(\omega_1,\omega_2,\cdots,\omega_m), \ \alpha_2=(\omega'_1,\omega_2,\cdots,\omega_m),$$
		where $\omega_1 \ne \omega'_1$.
		Then there exists $x\in N_\b$ such that $\a_1^x=\a_2$. Note that $G_\b\le G_\Delta$.
		Thus $x\in N_\Delta=H_1\times\dots\times H_m$ with $H_i<T_i$.
		Let $x=(t_1,t_2,\dots,t_m)$ with $t_i\in H_i$.
		Thus $y=(1,t_2^{-1},\dots,t_m^{-1})\in N_\Delta$, and
		\[x_0=xy=(t_1,1,\dots,1)\in N_\Delta,\ \mbox{and}\ \a_1^{x_0}=\a_2.\]
		Then $\alpha_2=\alpha_1^{x_0}\in\calD(\b)^{x_0}\cap \calD(\b)$.
		Since $\calD(\b)^{x_0}\cap \calD(\b)\neq\emptyset$, it follows that $\b^{x_0}=\b$ by Lemma~\ref{imprimitiveonB}, and so $x_0\in N_{\b}$.
		Let
		\[R=x_0^{G_\b}=\langle (t_1,1,\cdots,1)^g\,|\,g\in G_{\b}\rangle.\]
		Then $R\lhd G_{\b}$, and $R^{\calD(\b)}\not=1$ since $x_0$ sends $\a_1$ to $\a_2$,
		and hence the normal subgroup $R^{\calD(\b)}\lhd G_\b^{\calD(\b)}$ is transitive.
		
		Furthermore, any element $g\in G_\b$ has the form $g=(g_1,\dots,g_m)\pi$ where $g_i\in T_i$ and $\pi\in\S_m$, and so
		\[x_{0}^{g}=(t_1,1,\cdots,1)^{g}=(t_1^{g_1},\cdots,1^{g_m})^s=(1,\cdots,1,t_{1^{\pi^{-1}}}^{g_1},1,\cdots,1).\]
		It follows that $R=R_1\times\dots\times R_m$ is a direct product of the isomorphic subgroups $R_1,\dots,R_m$, where each $ R_i\le T_i $.
		This contradicts Lemma \ref{product}, and the lemma is proved.
	\end{proof}

	\section{Proof of the main theorem} \label{Proof of the main theorem}
	
	In this final section, we complete the proofs of the main theorem and its corollaries.
	
	{\bf Proof of Theorem~\ref{MainThm}:}\
	
	Let $\calD=(\calP,\calB,\calI)$ be a 2-design, and assume that $G\le\Aut(\calD)$ is locally primitive on $\calD$.
	
	By Lemma~\ref{faithful}, the group $G$ is faithful on both $\calP$ and $\calB$. Further, by Lemma~\ref{loc-pri1}, the group $G$ is transitive on the flag set and primitive on the point set $\calP$.
	Thus $G^\calP$ is a primitive permutation group, and $G^\calB$ is a transitive permutation group.
	
	First, if $G^\calB$ is not quasiprimitive, then $G^\calP$ is a primitive affine group by Lemma~\ref{lem:reg-subgp}, and then Lemma~\ref{HA-} shows that Theorem~\ref{MainThm} is satisfied.
	
	Otherwise, assume that $G^\calB$ is quasiprimitive.
	Then the primitive permutation group $G^\calP$ is of type HA, AS or PA by Lemma~\ref{sdcd}.
	Moreover, if $G^\calP$ is of type AS, then Theorem~\ref{MainThm} is satisfied, and the case where $G^\calP$ is of type PA is excluded by Lemmas \ref{PACD}-\ref{PAPA}.
	
	We thus only need to treat the case where $G^\calP$ is of type HA.
	Then $G$ has a unique minimal normal subgroup, which is abelian, and it then follows that the quasiprimitive permutation group $G^\calB$ is a primitive affine group.
	This completes the proof of Theorem~\ref{MainThm}.
	\qed
	
	{\bf Proof of Corollary~\ref{cor:sym-design}:}
	
	Let $\calD=(\calP,\calB,\calI)$ be a symmetric 2-design, and assume that $G\le\Aut(\calD)$ is locally primitive on $\calD$.
	Then we obtain by Theorem~\ref{MainThm} that $G^\calP$ is primitive, and either almost simple or affine.
	
	Since the dual structure of $\calD$ also forms a $G$-locally primitive symmetric design, again we obtain by Theorem~\ref{MainThm} that $G^\calB$ is primitive, and either almost simple or affine.
	The proof then follows.
	\qed

	{\bf Proof of Corollary~\ref{cor}:}
	
	Let $\calD=(\calP,\calB,\calI)$ be a $t$-design with $t\ge3$. By \cite[Thm.3.3.2]{faithful}, $\calD$ is non-symmetric. Let $G\le\Aut(\calD)$ be locally primitive on $\calD$. To complete the proof, by Theorem~\ref{MainThm} we may assume that $\calD$ is a subdesign of $\AG_i(d,q)$.
	
	Suppose that $ q>2 $. Let $x$ be a non-zero vector of $\calP=\FF_q^d$, and
	then $\{0,x,ax\}$ is a $ 3 $-subset of $\calP$, where $a\in\mathbb{F}_{q}\setminus\{0,1\}$. Given a block  $\b \in\calB$, note that $ \b $ contains $ 0\in\calP $ if and only if $ \b $ is a subspace of $ V $. Then, $ \b $ contains $\{0,x,ax\}$ if and only if it contains the $ 2 $-subset $\{0,x\}$.
	Since $\calD$ is a 2-design and a 3-design, it follows that $\lambda_3=\lambda_2$, which is a contradiction as $\lambda_2=\lambda_3 {v-2 \choose 3-2}/{k-2 \choose 3-2}$.

	We thus conclude that $q=2$.
	Since $\calD$ is non-trivial, we have that $d>2$. We already know that $\AG_i(d,2)$ is a 3-design when $i>1$. Suppose that its subdesign $\calD$ is a $ 4 $-design.  Let $x,y\in\calP=\FF_2^d$ be two linearly independent vectors, and
	then $\{0,x,y,x+y\}$ is a $ 4 $-subset of $\calP$. Similarly, a block contains $ 0\in\calP $ if and only if it is a subspace of $ V $; then, it contains  $\{0,x,y,x+y\}$ if and only if it contains $\{0,x,y\}$. It follows that $\lambda_4=\lambda_3$, a contradiction.\qed

	\subsection*{Declaration of Competing Interests}
	The authors have no known financial interests or other competing interests that are relevant to the content of this paper to declare.

	\subsection*{Data Availability}
	
	Data sharing not applicable to this article as no datasets were generated or analyzed during the current study.

	\subsection*{Acknowledgements}
	
	The project was partially supported by the NNSF of China (11931005).

	\bibliographystyle{siam}
	\bibliography{ref}

\end{document}